\newtheorem{theorem}{Theorem}[section]
\newtheorem{definition}{Definition}[section]
\newtheorem{lemma}{Lemma}[section]
\newtheorem{proposition}{Proposition}[section]
\begin{document}
\title{Adjacency spectra of some subdivision hypergraphs}

\author[1]{\rm Anirban Banerjee}
\author[1]{\rm Arpita Das}

\affil[1]{\small Department of Mathematics and Statistics}
\affil[ ]{Indian Institute of Science Education and Research Kolkata}
\affil[ ]{Mohanpur-741246,  India}
\affil[ ]{\textit {{\scriptsize anirban.banerjee@iiserkol.ac.in, arpitamath@iiserkol.ac.in}}}
\maketitle

\begin{abstract}
Here, we define a subdivision operation for a hypergraph and compute all the eigenvalues of the subdivision of regular and certain non-regular hypergraphs. In non-regular hypergraphs, we investigate the power of regular graphs, various types of hyperflowers, and the squid-like hypergraph. Using our subdivision operation, we also show how to construct non-regular non-isomorphic cospectral hypergraphs.

\end{abstract}
\textbf{AMS classification}: 05C50; 05C65; 05C76.\\
\textbf{Keywords}: Spectral theory of hypergraphs;  Equitable partition for hypergraphs; Subdivision Hypergraph; Hypergraph as a power of graph; Hyperflower.
\section{Introduction}
In spectral graph theory, the eigenvalues of a matrix associated with a graph are investigated, and from these eigenvalues, different properties of the graph structure are analyzed. There are several kinds of matrices associated with a graph, in which adjacency matrix,
Laplacian matrix and normalized Laplacian matrix are the popular matrices for researchers \cite{Bro,Car,Chu}. 
Similar to a graph, a hypergraph $H(V,E)$ also consists of a vertex set $V(H)$, and a hyperedge set $E(H)$. 
Unlike in a graph, a hyperedge of a hypergraph can be formed with more than two vertices. Thus in a hypergraph, $H(V,E)$, a hyperedge $e\in E(H)$ is a nonempty subset of the vertex set $V(H)$ \cite{Vol2}. Several properties of a hypergraph have been examined including Helly property, fractional transversal number, connectivity, and chromatic number in \cite{Ber,Vol1}. In 2012, Cooper and Dutle \cite{Cop} suggested the construction of adjacency tensors for studying hypergraphs. To avoid high  computational complexity in the tensor method, some authors have defined adjacency matrices for hypergraphs \cite{Ban, BanPar, Rod}.
Kumar and Varghese \cite{Kum} have proved some properties of the adjacency and Laplacian eigenvalues of a $(k, r)$-regular linear hypergraph and shown a relation between its dual with its line graph. 
Duttweiler and Reff.~\cite{Dut} have defined and studied oriented
hypergraph families that are analogous to signed graphs. They have also determined the adjacency and Laplacian eigenvalues for some of these families. 
In \cite{Sar}, Sarkar and Banerjee have found the complete spectrum of $s$-loose cycles and the characteristics polynomial of $s$-loose paths using edge corona operation. Moreover, using vertex corona, they have also shown how to generate infinitely many pairs of non-isomorphic co-spectral hypergraphs. Banerjee and Parui \cite{BanPar} introduced connectivity operators, namely, diffusion operators, general Laplacian operators, and general adjacency operators for hypergraphs. They have also derived spectral bounds for the weak connectivity number, degree of vertices, maximum cut, bipartition width, and isoperimetric constant of hypergraphs.\\
This article considers the matrix representation of hypergraphs suggested in \cite{Ban}. First, we define a subdivision operation for a hypergraph and determine the complete adjacency spectrum of the subdivision of regular and some non-regular hypergraphs. In non-regular hypergraphs, we study the power of regular graphs constructed in \cite{Hu},  $(l,r)$-hyperflower defined by Andreotti and Mulas in \cite{And}, and Hyperstar. 
We  also define a petal overlapped hyperflower and find the complete adjacency spectrum after applying the subdivision operation to it. Followed by squid \cite{Hu}, we construct a squid like hypergraph and determine the same of subdivision of this  hypergraph. Our derived results suggest a construction of non-regular non-isomorphic cospectral hypergraphs. This is discussed in the last section of our article.

Here, we study hypergraphs where the cardinality of each hyperedge is greater than or equal to two.
Two vertices $v_{i}, v_{j} \in V(H)$ are called adjacent if $v_{i}, v_{j} \in e$ for some $e\in E(H)$, and we denote it by $v_{i}\sim v_{j}$. The degree of a vertex $v\in V(H)$ is the number of hyperedges containing $v$. Here, in our work, we consider hypergraphs where the degree of any vertex is greater than zero.  A hypergraph $H$ is said to be a ($k$-)uniform hypergraph if all the hyperedges of $H$ have the same cardinality ($k$) and $H$ is called linear if each pair of hyperedges of $H$ has at most one vertex in common. A hypergraph $H$ is ($r$-)regular if the degree of all vertices is the same $(r)$.
Now, we recall the definition of the adjacency matrix $A_{H}$
of a hypergraph $H(V,E)$ from \cite{Ban}. For an $m$-uniform hypergraph the $(i,j)$-th entry $(A_{H})_{ij}$ of the adjacency matrix $A_{H}$ is defined as
$$
(A_{H})_{ij} = \left\{ \begin{array}{rl}
\frac{d_{ij}}{m-1} &\mbox{ if $v_i \sim v_j$} \\
  0 &\mbox{ elsewhere,}
       \end{array} \right.
$$


where $d_{ij}$ is the codegree of vertices $v_{i}$ and $v_{j}$, that is, the number of hyperedges containing both the vertices $v_{i}$ and $v_{j}$. \\
We denote the characteristic polynomial of $H$, (that is, of $A_{H}$) by $\Phi_{H}(x)=\det (xI-A_{H})$. So the roots of $\Phi_{H}(x)$ are the adjacency eigenvalues of hypergraph $H$. Since the matrix $A_{H}$ is symmetric, the roots of the polynomial $\Phi_{H}(x)$ are real. The eigenvalues of $A_{H}$ are denoted by $\lambda_{1}(H)\geq\lambda_{2}(H)\geq \cdots \geq \lambda_{n}(H)$. We can construct orthogonal eigenvectors of $A_{H}$.
Let $H$ be a uniform hypergraph whose incidence matrix is $B(H)$. If $H$ is an $r$-regular hypergraph then $B(H)B^{T}(H)=rI_{n}+A_{H}$. 
\\
The Kronecker product of the matrices $A=(a_{ij})$ of size $m \times n$ and $B$ of size $p \times q$ is denoted by $A \otimes B$, and is defined as an $mp \times nq$
partitioned matrix $(a_{ij}B)$. For the matrices $M$, $N$, $P$, and $Q$ of suitable sizes, $MN \otimes PQ=(M \otimes P)(N \otimes Q)$. Thus for nonsingular matrices $M$ and $N$, $(M \otimes N)^{-1}=M^{-1}\otimes N^{-1}$.  For square matrices $M$ and $N$ of order $k$ and $s$, respectively, $\det(M \otimes N)=(\det M)^s(\det N)^k$.\\
Throughout in this article, for any positive integers $k$, $m$, and $n$; $I_{k}$ denotes the identity matrix of size $k$, $J_{n\times m}$ denotes the $n\times m$ matrix whose all entries are $1$, $\mathbf{1}_{n}$ stands for the column vector of size $n$ with all the entries equal to $1$, $C_{n}$ denotes the cycle graph of order $n$, and $O_{m\times n}$ denotes the zero matrix of size $m\times n$.
\begin{lemma}{(Schur Complement)}: Suppose the order of all four matrices $M$, $N$, $P$ and $Q$ satisfy the rules of operations on matrices. Then we have,
\begin{eqnarray*}
\begin{vmatrix}
  M & N \\
  P & Q
  \end{vmatrix}
&=& |Q||M-NQ^{-1}P|, \mbox{if $Q$ is a non-singular square matrix},\\
&=& |M||Q-PM^{-1}N|, \mbox{if $M$ is a non-singular square matrix}.
\end{eqnarray*}
\end{lemma}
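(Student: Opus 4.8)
The plan is to prove each of the two identities by exhibiting an explicit block factorization of $\begin{pmatrix} M & N \\ P & Q \end{pmatrix}$ as a product of a unit block-triangular matrix and a block-triangular matrix, and then taking determinants of both sides using the multiplicativity $\det(XY)=\det X\,\det Y$. This reduces everything to the determinant of a block-triangular matrix, which factors as the product of the determinants of its diagonal blocks.

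For the first identity, assuming $Q$ is a nonsingular square matrix, I would write
\begin{equation*}
\begin{pmatrix} M & N \\ P & Q \end{pmatrix} = \begin{pmatrix} I & NQ^{-1} \\ O & I \end{pmatrix} \begin{pmatrix} M - NQ^{-1}P & O \\ P & Q \end{pmatrix},
\end{equation*}
and verify by direct block multiplication that the right-hand side reproduces the four blocks $M$, $N$, $P$, $Q$; the only cancellation is $NQ^{-1}P$ against $-NQ^{-1}P$ in the top-left block. Taking determinants, the first factor is unit upper block-triangular and hence has determinant $1$, while the second factor is lower block-triangular with diagonal blocks $M-NQ^{-1}P$ and $Q$, so its determinant equals $|M-NQ^{-1}P|\,|Q|$. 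This yields the first equality. For the second identity, assuming $M$ is nonsingular, I would use the mirror-image factorization
\begin{equation*}
\begin{pmatrix} M & N \\ P & Q \end{pmatrix} = \begin{pmatrix} I & O \\ PM^{-1} & I \end{pmatrix} \begin{pmatrix} M & N \\ O & Q - PM^{-1}N \end{pmatrix},
\end{equation*}
and argue identically: the first factor has determinant $1$, and the second is upper block-triangular with diagonal blocks $M$ and $Q-PM^{-1}N$, giving $|M|\,|Q-PM^{-1}N|$.

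The computational content is entirely routine once the factorizations are written down, so there is no genuine obstacle here; the only points requiring care are the dimensional bookkeeping and one standard ingredient. On the dimensional side, one must note that $M$ and $Q$ are square (so that their determinants and the two Schur complements are defined) and that $N$ and $P$ have compatible sizes making the products $NQ^{-1}P$ and $PM^{-1}N$ conform and come out square of the correct order. The single nontrivial fact being invoked is that the determinant of a block-triangular matrix equals the product of the determinants of its diagonal blocks; this is classical and follows from the Leibniz expansion or an induction on the block sizes, so I would cite it rather than reprove it.
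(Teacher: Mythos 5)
Your proof is correct: both block factorizations multiply out to the original partitioned matrix, the unit block-triangular factors have determinant $1$, and the block-triangular factors contribute $|M-NQ^{-1}P|\,|Q|$ and $|M|\,|Q-PM^{-1}N|$ respectively. The paper gives no proof of this lemma at all --- it is stated as a standard preliminary fact --- so there is nothing to compare against; your argument is the classical block-LU derivation and is exactly what one would supply if a proof were required.
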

\noindent
\textbf{Equitable partition for hypergraphs:}\\
Let $H(V, E)$ be an $m$-uniform hypergraph. We say a partition $\Pi=\{C_{1}, C_{2}, \cdots, C_{k}\}$ of $V(H)$ is an equitable partition of $V(H)$ if for any $p, q\in \{1, 2, \cdots, k\}$ and for any $i \in C_{p}$,
$$\sum_{j,j\in C_{q}}(A_{H})_{ij}=b_{pq},$$
where $b_{pq}$ is a constant only depends on $p$ and $q$. 
The matrix $Q=(b_{pq})$ of size $k\times k$ is called a \textit{quotient matrix} of $A_{H}$ corresponding to the equitable partition $\Pi$.
Now we have the following proposition on the quotient matrix $Q$.
\begin{proposition}\cite{Bro}
Let $Q$ be a quotient matrix of any square matrix $A$ corresponding to an equitable partition. Then the spectrum of $A$ contains the spectrum of $Q$.
\end{proposition}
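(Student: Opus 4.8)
The plan is to prove that every eigenvalue of the quotient matrix $Q$ is also an eigenvalue of $A$ by lifting each eigenvector of $Q$ to an eigenvector of $A$ via the characteristic-vector structure of the equitable partition. First I would set up the characteristic matrix $S$ of the partition $\Pi=\{C_1,\dots,C_k\}$: let $S$ be the $n\times k$ matrix whose $(i,p)$-entry is $1$ if vertex $v_i\in C_p$ and $0$ otherwise. Each column of $S$ is the indicator vector of a cell, and since $\Pi$ is a partition these columns are nonzero and have disjoint supports, so $S$ has full column rank $k$.

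The key step is to translate the equitable-partition condition into the single matrix identity $A S = S Q$. I would verify this entrywise: the $(i,q)$-entry of $AS$ is $\sum_{j\in C_q}(A_H)_{ij}$, which by the definition of an equitable partition equals $b_{pq}$ whenever $v_i\in C_p$; meanwhile the $(i,q)$-entry of $SQ$ is $\sum_{p'} S_{ip'}b_{p'q}=b_{pq}$ for the unique $p$ with $v_i\in C_p$. Hence the two matrices agree, giving $AS=SQ$. This identity is the heart of the argument, and the step I expect to require the most care is confirming that it holds for every row $i$ simultaneously, i.e. checking that the constant $b_{pq}$ genuinely depends only on the cell index $p$ and not on the particular representative $i\in C_p$ — which is exactly what the definition of equitable partition guarantees.

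From $AS=SQ$ the conclusion follows quickly. If $\mu$ is an eigenvalue of $Q$ with eigenvector $w\in\mathbb{R}^k\setminus\{0\}$, then $A(Sw)=S(Qw)=S(\mu w)=\mu(Sw)$, so $Sw$ is an eigenvector of $A$ with eigenvalue $\mu$, provided $Sw\neq 0$. Since $S$ has full column rank its columns are linearly independent, so $Sw=0$ forces $w=0$, a contradiction; thus $Sw\neq 0$. Therefore $\mu$ is an eigenvalue of $A$, and every eigenvalue of $Q$ lies in the spectrum of $A$.

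Finally I would remark on multiplicities, which makes the statement sharp: because $S$ is injective, linearly independent eigenvectors of $Q$ map to linearly independent eigenvectors of $A$, so each eigenvalue of $Q$ appears in the spectrum of $A$ with at least the multiplicity it has for $Q$. This yields the containment of spectra (counted with multiplicity) asserted in the proposition. Note that the argument uses only the algebraic identity $AS=SQ$ and the injectivity of $S$; symmetry of $A_H$ is not needed here, although it guarantees that both $A$ and $Q$ are diagonalizable so the spectra may be compared cleanly.
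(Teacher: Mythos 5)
Your proof is correct and is essentially the standard argument from the cited reference (Brouwer--Haemers): the paper itself states this proposition without proof, and your derivation via the characteristic matrix $S$, the identity $AS=SQ$, and the injectivity of $S$ is exactly the canonical route, including the correct remark on multiplicities.
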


The concept of subdivision of graphs is uniquely defined in graph theory \cite{Cve}. Iradmusa \cite{Ira} defined the concept of subdivision of a hyperedge of a hypergraph. Here we introduce a different notion of subdivision of a hyperedge, as it is defined on an edge in subdivision of a graph. The subdivision graph of a graph $G$ is the graph obtained by replacing every edge in $G$ with a copy of $P_{3}$ (a path of length two). \\
\textbf{Subdivision of a hyperedge}: Subdivision of a hyperedge of cardinality $k$ is to introduce a new vertex and construct $k$ new hyperedges by taking the new vertex and $(k-1)$ old vertices from that hyperedge and delete the old hyperedge. As $(k-1)$ vertices can be choosen in ${k}\choose{k-1}$ ways from $k$ old vertices, hence by replacing one old hyperedge of cardinality $k$, we get $k$ number of new hyperedges with cardinality $k$.\\
\begin{figure}
\centering
\includegraphics[width=4.0in]{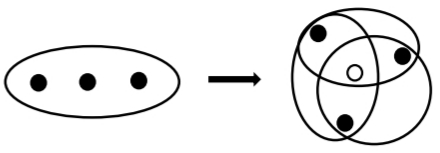}
\caption{Subdivision of an edge}
\end{figure}
\textbf{Subdivision hypergraph}: A subdivision hypergraph $S(H)$ of a hypergraph $H$ is a hypergraph resulting from the subdivision of all hyperedges in $H$. \\
Note that a subdivision hypergraph is not a linear hypergraph.
\section{Adjacency of subdivision of hypergraphs}
Now we find the spectrum of the adjacency matrix of some subdivision hypergraphs.
\subsection{Adjacency spectra of subdivision of regular hypergraph}
\begin{theorem}\label{t1}
Let $H$ be an $r$-regular $k$-uniform hypergraph with $n$ vertices and $m$ hyperedges. Then the adjacency spectrum of $S(H)$ consists of:
\begin{enumerate}[(i)]
\item the eigenvalue $0$ with the multiplicity $m-n$,
\item the roots of the equations \\
$(k-1)x^{2}-(k-2)\lambda_{i}(H)x-r(k-1)-(k-1)\lambda_{i}(H)=0$, for $i=1,2,\cdots,n$;\\
where $\lambda_{i}(H)$ are the eigenvalues of $A_H$.
\end{enumerate}
\end{theorem}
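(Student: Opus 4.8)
The plan is to set up $A_{S(H)}$ in block form relative to the partition of $V(S(H))$ into the $n$ original vertices and the $m$ new vertices (one for each hyperedge of $H$), and then to read off $\Phi_{S(H)}(x)$ by a Schur complement reduction onto the spectrum of $A_H$. The real content lies in the first step, so I would begin by recording how subdivision reshapes adjacencies. Fix a hyperedge $e$ of $H$ with new vertex $w$; its $k$ replacement hyperedges are formed by deleting one old vertex of $e$ at a time and adjoining $w$. Counting co-occurrences in these $k$ edges gives three facts: two original vertices of $e$ lie together in exactly $k-2$ replacement edges (those deleting neither of them), the new vertex $w$ lies with each original vertex of $e$ in exactly $k-1$ of them, and two distinct new vertices never meet. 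Summing the first count over the $d_{ij}$ hyperedges of $H$ through $v_i$ and $v_j$, and dividing by $k-1$ as in the definition of the adjacency matrix, I obtain
\begin{equation*}
A_{S(H)}=\begin{pmatrix} (k-2)A_H & B \\ B^{T} & O_{m\times m}\end{pmatrix},
\end{equation*}
where $B=B(H)$ is the $n\times m$ incidence matrix: the off-diagonal blocks have entry $1$ because the codegree $k-1$ is divided by $k-1$, and the lower-right block vanishes since new vertices are pairwise non-adjacent.

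Next I would evaluate $\Phi_{S(H)}(x)=\det(xI_{n+m}-A_{S(H)})$ with the Schur complement lemma, taking the bottom-right block $xI_m$ as the invertible pivot (so momentarily $x\neq 0$):
\begin{equation*}
\Phi_{S(H)}(x)=x^{m}\det\!\Big(xI_n-(k-2)A_H-\tfrac{1}{x}\,BB^{T}\Big).
\end{equation*}
At this point I would substitute the regularity relation $BB^{T}=rI_n+A_H$, after which the matrix inside the determinant is a polynomial in $A_H$ alone and hence simultaneously diagonalizable with it. Since $A_H$ is symmetric with eigenvalues $\lambda_i(H)$, the determinant factors as a product over $i=1,\dots,n$ of the corresponding scalar expression; multiplying each of the $n$ scalar factors by $x$ converts the leading $x^{m}$ into $x^{m-n}$ and turns each factor into a quadratic in $x$. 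Setting each quadratic to zero and clearing the denominator produces precisely the $n$ equations of part (ii), one per $\lambda_i(H)$, while the surviving prefactor $x^{m-n}$ supplies the eigenvalue $0$ with multiplicity $m-n$, which is part (i). The Schur step assumed $x\neq 0$, but both sides are polynomials once $m\geq n$, so they agree at $x=0$ as well; the total degree is $(m-n)+2n=m+n$, matching the order of $A_{S(H)}$.

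The step I expect to be the main obstacle is the combinatorial bookkeeping that pins down the block structure, above all the verification that the old--old block is governed by the internal codegree $k-2$ while the old--new block is governed by the codegree $k-1$, since everything downstream is a mechanical determinant evaluation once these scalars are correct. A secondary point to dispatch is the exponent in part (i): for an $r$-regular $k$-uniform hypergraph the handshake identity $rn=km$ gives $m-n=n(r-k)/k$, so the stated multiplicity is nonnegative exactly when $r\geq k$, the regime in which the $x^{m-n}$ prefactor is a genuine polynomial factor and the boundary behavior at $x=0$ is settled by the polynomial-identity argument above.
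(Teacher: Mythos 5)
Your route is the same as the paper's: split $V(S(H))$ into the $n$ old and $m$ new vertices, write $A_{S(H)}$ in $2\times 2$ block form, pivot on the $xI_m$ block via the Schur complement, substitute the regularity identity for $B B^{T}$, and factor the remaining determinant over the spectrum of $A_H$. Your combinatorial bookkeeping for the blocks (codegree $k-2$ for old--old pairs within a replaced hyperedge, $k-1$ for old--new pairs, $0$ for new--new pairs) is exactly the content the paper leaves implicit, and it is correct.

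The genuine problem is a normalization clash between your two main ingredients. With the paper's definition $(A_H)_{ij}=d_{ij}/(k-1)$, your old--old block $(k-2)A_H$ is right, but the incidence identity you then import is not: the off-diagonal entries of $BB^{T}$ are the raw codegrees $d_{ij}=(k-1)(A_H)_{ij}$, so $BB^{T}=rI_n+(k-1)A_H$, not $rI_n+A_H$. Running your computation with the identity as you invoked it gives factors $x^{2}-(k-2)\lambda_i(H)\,x-r-\lambda_i(H)$, while clearing denominators in the stated equation gives $x^{2}-\frac{k-2}{k-1}\lambda_i(H)\,x-r-\lambda_i(H)$; the linear coefficients differ by a factor of $k-1$, so the claim that your quadratics ``produce precisely the equations of part (ii)'' fails. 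Doing the computation consistently with the normalized $A_H$ instead yields $x^{2}-(k-2)\lambda_i(H)\,x-r-(k-1)\lambda_i(H)$, which again is not the stated equation. The statement, and the paper's own proof (which writes the old--old block as $\frac{k-2}{k-1}A(H)$), are coherent only if $\lambda_i(H)$ is read as an eigenvalue of the unnormalized codegree matrix $(k-1)A_H$, for which $BB^{T}=rI_n+A_H$ does hold; you need to either adopt that convention throughout or keep the normalized $A_H$ and correct the quadratic accordingly. Your closing observations on the degree count and on when $m-n\geq 0$ are sound and go beyond what the paper records.
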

\begin{proof}
Let us consider a $k$-uniform linear hypergraph $H$ with $n$ vertices, $m$ hyperedges and regularity $r$.\\
The adjacency matrix of $S(H)$ is written as:
\begin{eqnarray*}
A_{S(H)}&=&\frac{1}{(k-1)}\begin{pmatrix}
  (k-2)A(H) & (k-1)B(H) \\
  (k-1)B^{T}(H) & O_{m}
  \end{pmatrix}.
\end{eqnarray*}
The adjacency characteristic polynomial of $S(H)$
\begin{eqnarray*}
\Phi_{A_{S(H)}}(x)&=&\det\begin{pmatrix}
  xI_{n}-\frac{k-2}{k-1}A(H) & -B(H) \\
  -B^{T}(H) & xI_{m}
  \end{pmatrix}\\
&=& x^{m}\det\Big(xI_{n}-\frac{k-2}{k-1}A(H)-\frac{1}{x}B(H)B^{T}(H)\Big)\\
&=& x^{m}\det\Big(xI_{n}-\frac{k-2}{k-1}A(H)-\frac{1}{x}(A(H)+rI_{n}\Big)\\
&=& x^{m}\det\Big\{\Big(x-\frac{r}{x}\Big)I_{n}-\Big(\frac{k-2}{k-1}+\frac{1}{x}\Big)A(H)\Big\}\\
&=& x^{m}\prod_{i=1}^{n}\Big\{x-\frac{r}{x}-\Big(\frac{k-2}{k-1}+\frac{1}{x}\Big)\lambda_{i}(H)\Big\}\\
&=& x^{m-n}\prod_{i=1}^{n}\Big\{x^{2}-\frac{k-2}{k-1}\lambda_{i}(H)x-r-\lambda_{i}(H)\Big\}
\end{eqnarray*}
Thus the proof follows.
\end{proof}
Now, we consider hypergraphs which are not regular.
\subsection{Adjacency spectra of subdivision of power of a graph}
We recall the definition of \textit{power of graph} from \cite{Hu}.
\begin{definition}
Let $G=(V,E)$ be a graph. For any $k\geq3$, the $k^{th}$ power of $G$, $G^{k}=(V^{k},E^{k})$ is defined as the k-uniform hypergraph with the set of vertices $V^{k}(H)=V(H)\cup\{i_{e,1},\cdots,i_{e,k-2}|e\in E(H)\}$ and the set of edges $E^{k}(H)=\{e\cup\{i_{e,1},\cdots,i_{e,k-2}\}|e\in E(H)\}$.
\end{definition}
\begin{figure}
\centering
\includegraphics[width=4.0in]{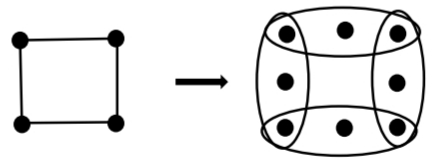}
\caption{A power of a cycle graph: Linear Hypercycle}
\end{figure}
\begin{theorem}\label{t2}
Let $G$ be an $r$-regular graph with $n$ vertices and $m$ edges. Then the adjacency spectrum of $S(G^{k})$ consists of:
\begin{enumerate}[(i)]
\item the eigenvalue $-\frac{k-2}{k-1}$ with the multiplicity $m(k-3)$,
\item the two roots of the equation \\
$(k-1)x^{2}-(k-2)(k-3)x-(k-1)(k-2)=0$, with multiplicity $m-n$, and
\item the roots of the equations\\
$(k-1)^{2}x^{3}-(k-1)(k-2)(\lambda_{i}(G)+k-3)x^{2}-\{(k-1)^{2}(r+\lambda_{i}(G))+(k-1)^{2}(k-2)+(k-2)^{3}(r+\lambda_{i}(G))-(k-2)^{2}(k-3)\lambda_{i}(G)\}x+(k-1)(k-2)(k-3)(r+\lambda_{i}(G))+(k-1)(k-2)^{2}\lambda_{i}(G)-2(k-1)(k-2)^{2}(r+\lambda_{i}(G))=0$, for $i=1,2,\cdots,n$; \\
where $\lambda_{i}(G)$ are the eigenvalues of $A_G$.
\end{enumerate}
\end{theorem}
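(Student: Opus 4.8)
The plan is to follow the strategy of Theorem~\ref{t1}: express $A_{S(G^k)}$ as a block matrix indexed by the three natural vertex classes of $S(G^k)$ and then reduce the characteristic polynomial using the incidence matrix $B=B(G)$ of $G$ together with the identity $BB^T=rI_n+A(G)$. Order the vertices of $S(G^k)$ in three blocks: the $n$ original vertices of $V(G)$; the $m(k-2)$ internal vertices $i_{e,j}$ (grouped $k-2$ per edge $e$); and the $m$ subdivision vertices $s_e$ (one for each hyperedge of $G^k$). A direct codegree count shows that the only nonzero codegrees are $k-1$ between $s_e$ and each old vertex of its hyperedge (matrix entry $1$) and $k-2$ between any two old vertices of a common hyperedge (entry $\frac{k-2}{k-1}$); writing $\mathbf{1}=\mathbf{1}_{k-2}$ this gives
\begin{equation*}
A_{S(G^k)}=\begin{pmatrix}
\frac{k-2}{k-1}A(G) & \frac{k-2}{k-1}\,B\otimes\mathbf{1}^T & B \\
\frac{k-2}{k-1}\,B^T\otimes\mathbf{1} & I_m\otimes\frac{k-2}{k-1}(J_{k-2}-I_{k-2}) & I_m\otimes\mathbf{1} \\
B^T & I_m\otimes\mathbf{1}^T & O_m
\end{pmatrix}.
\end{equation*}

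Instead of expanding the whole determinant at once, I would split $\mathbb{R}^{\,n+m(k-1)}$ into three $A_{S(G^k)}$-invariant families and read off a small matrix on each. First, for each edge $e$ any vector supported on the $k-2$ internal vertices of $e$ with zero coordinate sum is annihilated by every off-diagonal block and is multiplied by $-\frac{k-2}{k-1}$ on its own block; this produces the eigenvalue $-\frac{k-2}{k-1}$ with multiplicity $m(k-3)$, which is (i). Next, for each $z\in\ker B$ the vectors that vanish on $V(G)$, equal $\alpha z_e$ on the internal vertices of $e$, and equal $\beta z_e$ on $s_e$ span a $2$-dimensional invariant space; because $Bz=0$ clears the original-vertex block, the action collapses to $\bigl(\begin{smallmatrix}\frac{(k-2)(k-3)}{k-1} & 1\\ k-2 & 0\end{smallmatrix}\bigr)$ on $(\alpha,\beta)$, whose characteristic equation is $(k-1)x^2-(k-2)(k-3)x-(k-1)(k-2)=0$, repeated $\dim\ker B=m-n$ times; this is (ii). Finally, for each eigenvector $u$ of $A(G)$ with $A(G)u=\lambda_i u$ (writing $\lambda_i=\lambda_i(G)$), the lifts $\phi_1=(u;0;0)$, $\phi_2=(0;(B^Tu)\otimes\mathbf{1};0)$, $\phi_3=(0;0;B^Tu)$ span an invariant space, where $BB^Tu=(r+\lambda_i)u$ is exactly what closes the system; the restricted matrix is
\begin{equation*}
M_i=\begin{pmatrix}
\frac{k-2}{k-1}\lambda_i & \frac{(k-2)^2}{k-1}(r+\lambda_i) & r+\lambda_i \\
\frac{k-2}{k-1} & \frac{(k-2)(k-3)}{k-1} & 1 \\
1 & k-2 & 0
\end{pmatrix}.
\end{equation*}

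The routine-but-central computation is $\det(xI_3-M_i)=0$: clearing $(k-1)^2$ and collecting powers of $x$ yields exactly the cubic in (iii), and I would carry out this expansion in full (the $x^2$, $x$, and constant coefficients all fall out after using $r+\lambda_i$ to simplify). The step needing the most care is completeness and bookkeeping: one must verify that the three families are linearly independent and span $\mathbb{R}^{\,n+m(k-1)}$. The tally $m(k-3)+2(m-n)+3n=m(k-1)+n$ matches the number of vertices, and the families separate because $\operatorname{im}B^T$ (used by family (c) on the $s_e$- and internal-blocks) is orthogonal to $\ker B$ (used by family (b)), while zero-sum internal vectors are orthogonal to the edge-constant ones. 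This count, however, silently assumes $\operatorname{rank}B=n$, i.e.\ $\dim\ker B=m-n$ and $B^Tu\neq 0$ for every $A(G)$-eigenvector $u$, which is precisely the condition that $G$ have no bipartite component. I would therefore either add this hypothesis or treat the bipartite kernel of $B^T$ (carrying the eigenvalue $-r$ of $A(G)$) separately, where the lift degenerates to $\phi_1$ alone and contributes the single eigenvalue $-\frac{(k-2)r}{k-1}$. Equivalently, the same factorization follows by Schur-complementing out the $s_e$-block and then the internal block as in Theorem~\ref{t1}, rewriting the surviving $n\times n$ matrix as a function of $A(G)$ via $BB^T=rI_n+A(G)$ and splitting $\mathbb{R}^m=\operatorname{im}B^T\oplus\ker B$; the first summand reproduces the cubics (iii) and the second the quadratics (ii).
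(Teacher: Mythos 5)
Your proposal is correct, and I checked the key computations: your block form of $A_{S(G^k)}$ agrees (up to a permutation of the internal vertices) with the paper's, your $2\times2$ matrix on the $\ker B$ family has characteristic equation $(k-1)x^{2}-(k-2)(k-3)x-(k-1)(k-2)=0$, and the characteristic polynomial of your $M_i$, after clearing $(k-1)^2$, reproduces the cubic in (iii) exactly (e.g.\ the constant term of both simplifies to $-(k-1)(k-2)\{(k-1)r+\lambda_i\}$). However, your route is genuinely different from the paper's proof of this theorem. The paper treats $\Phi_{A_{S(G^k)}}(x)$ as a single determinant and performs a two-stage Schur-complement reduction: first eliminating the $m\times m$ subdivision-vertex block, then the $m(k-2)\times m(k-2)$ internal block, and only at the end substituting $B(G)B(G)^{T}=rI_{n}+A(G)$ and factoring over the eigenvalues of $A(G)$. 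You instead decompose $\mathbb{R}^{\,n+m(k-1)}$ into explicit invariant families and read off $1\times1$, $2\times2$ and $3\times3$ restricted matrices --- which is in fact the method the paper uses for its hyperflower and squid theorems, just not for this one. Each approach has its merits: the determinant identity is uniform and requires no case analysis, since it holds as a polynomial identity whatever the rank of $B$; your eigenvector construction produces the eigenvectors and multiplicities transparently, at the cost of the completeness count and the rank issue you correctly flag. Your handling of that issue is also right: when $G$ has a bipartite component, $\dim\ker B$ exceeds $m-n$ and the $-r$ eigenvector has $B^{T}u=0$, but the cubic in (iii) at $\lambda_i=-r$ factors as the quadratic of (ii) times $(k-1)x+(k-2)r$, so the extra quadratic pair and the single eigenvalue $-\tfrac{(k-2)r}{k-1}$ exactly compensate and the stated spectrum is unchanged. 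So no hypothesis needs to be added; you just need to present the bipartite case as the degenerate limit rather than as an exception.
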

\begin{proof}
Let us consider a $r$-regular graph $G$ with $n$ vertices and $m$ edges.\\
Then the adjacency matrix of $S(G^{k})$ can be written as:
\begin{eqnarray*}
A_{S(G^{k})}&=&\frac{1}{(k-1)}\begin{pmatrix}
  (k-2)A(G) & (k-2)\mathbf{1}_{k-2}^{T}\otimes B(G) & (k-1)B(G) \\
  (k-2)\mathbf{1}_{k-2}\otimes B(G)^{T} & (k-2)(J_{k-2}-I_{k-2})\otimes I_{m} & (k-1)\mathbf{1}_{k-2}\otimes I_{m}\\
  (k-1)B^{T}(G) & (k-1)\mathbf{1}_{k-2}^{T}\otimes I_{m} & O_{m}
  \end{pmatrix}.
\end{eqnarray*}
The adjacency characteristic polynomial of $S(G^{k})$
\begin{eqnarray*}
\Phi_{A_{S(G^{k})}}(x)&=&\det\begin{pmatrix}
  xI_{n}-\frac{k-2}{k-1}A(G) & -\frac{k-2}{k-1}\mathbf{1}_{k-2}^{T}\otimes B(G) & -B(G) \\
  -\frac{k-2}{k-1}\mathbf{1}_{k-2}\otimes B(G)^{T} & \{xI_{k-2}-\frac{k-2}{k-1}(J_{k-2}-I_{k-2})\}\otimes I_{m} & -\mathbf{1}_{k-2}\otimes I_{m}\\
  -B^{T}(G) & -\mathbf{1}_{k-2}^{T}\otimes I_{m} & xI_{m}
  \end{pmatrix}\\\\
&=& x^{m}\det M, 
\end{eqnarray*}
where
\begin{eqnarray*}
M&=&\Bigg[\begin{pmatrix}
xI_{n}-\frac{k-2}{k-1}A(G) & -\frac{k-2}{k-1}\mathbf{1}_{k-2}^{T}\otimes B(G) \\
-\frac{k-2}{k-1}\mathbf{1}_{k-2}\otimes B(G)^{T} & \{xI_{k-2}+\frac{k-2}{k-1}(I_{k-2}-J_{k-2})\}\otimes I_{m}
\end{pmatrix}
\\\\&&-
\begin{pmatrix}
-B(G)\\
-\mathbf{1}_{k-2}\otimes I_{m}
\end{pmatrix}
(xI_{m})^{-1}
\begin{pmatrix}
-B^{T}(G) & -\mathbf{1}_{k-2}^{T}\otimes I_{m}
\end{pmatrix}\Bigg]\\\\
&=& \begin{pmatrix}
xI_{n}-\frac{k-2}{k-1}A(G)-\frac{1}{x}B(G)B(G)^{T} & -(\frac{k-2}{k-1}+\frac{1}{x})\mathbf{1}_{k-2}^{T}\otimes B(G) \\
-(\frac{k-2}{k-1}+\frac{1}{x})\mathbf{1}_{k-2}\otimes B(G)^{T} & \{(x+\frac{k-2}{k-1})I_{k-2}-(\frac{k-2}{k-1}+\frac{1}{x})J_{k-2}\}\otimes I_{m}
\end{pmatrix}\\\\
&=& \begin{pmatrix}
xI_{n}-\frac{k-2}{k-1}A(G)-\frac{1}{x}(rI_{n}+A(G)) & -(\frac{k-2}{k-1}+\frac{1}{x})\mathbf{1}_{k-2}^{T}\otimes B(G) \\
-(\frac{k-2}{k-1}+\frac{1}{x})\mathbf{1}_{k-2}\otimes B(G)^{T} & \{(x+\frac{k-2}{k-1})I_{k-2}-(\frac{k-2}{k-1}+\frac{1}{x})J_{k-2}\}\otimes I_{m}
\end{pmatrix}
\end{eqnarray*}
Therefore
\begin{eqnarray*}
\Phi_{A_{S(G^{k})}}(x)&=& x^{m}\det\Big[\Big\{\Big(x+\frac{k-2}{k-1}\Big)I_{k-2}-\Big(\frac{k-2}{k-1}+\frac{1}{x}\Big)J_{k-2}\Big\}\otimes I_{m}\Big]\\
&&\det\Bigg[xI_{n}-\frac{k-2}{k-1}A(G)-\frac{1}{x}(rI_{n}+A(G))-\Big(\frac{k-2}{k-1}+\frac{1}{x}\Big)^{2}\\
&&\Bigg\{(\mathbf{1}_{k-2}^{T}\otimes B(G))\Big\{\Big(\Big(x+\frac{k-2}{k-1}\Big)I_{k-2}-\Big(\frac{k-2}{k-1}+\frac{1}{x}\Big)J_{k-2}\Big)\otimes I_{m}\Big\}^{-1}(\mathbf{1}_{k-2}\otimes B(G)^{T})\Bigg\}\Bigg]\\\\
&=& x^{m}\det\Big[\Big(x+\frac{k-2}{k-1}\Big)I_{k-2}-\Big(\frac{k-2}{k-1}+\frac{1}{x}\Big)J_{k-2}\Big]^{m}\\
&&\det\Bigg[\Big(x-\frac{r}{x}\Big)I_{n}-\Big(\frac{k-2}{k-1}+\frac{1}{x}\Big)A(G)-\Big(\frac{k-2}{k-1}+\frac{1}{x}\Big)^{2}\\
&&\Big\{\mathbf{1}_{k-2}^{T} \Big(\Big(x+\frac{k-2}{k-1}\Big)I_{k-2}-\Big(\frac{k-2}{k-1}+\frac{1}{x}\Big)J_{k-2}\Big)^{-1}\mathbf{1}_{k-2}\Big\}\otimes B(G)B(G)^{T}\Bigg]\\\\
&=& \Big(x+\frac{k-2}{k-1}\Big)^{m(k-3)}\Big\{\Big(x+\frac{k-2}{k-1}\Big)-\Big(\frac{k-2}{k-1}+\frac{1}{x}\Big)(k-2)\Big\}^{m}\\
&&\det\Bigg[\Big(x-\frac{r}{x}\Big)I_{n}-\Big(\frac{k-2}{k-1}+\frac{1}{x}\Big)A(G)-\Big(\frac{k-2}{k-1}+\frac{1}{x}\Big)^{2} \frac{(k-2)(rI_{n}+A(G)}{\Big(x+\frac{k-2}{k-1}\Big)-\Big(\frac{k-2}{k-1}+\frac{1}{x}\Big)(k-2)}\Bigg]\\\\
&=& \Big(x+\frac{k-2}{k-1}\Big)^{m(k-3)}\Big[x^{2}-\frac{(k-2)(k-3)}{k-1}x-(k-2)\Big]^{m}\\
&&\prod_{i=1}^{n}x^{-1}\Bigg[x^2-r-\Big(\frac{k-2}{k-1}x+1\Big)\lambda_{i}(G)-\Big(\frac{k-2}{k-1}x+1\Big)^{2} \frac{(k-2)(r+\lambda_{i}(G))}{x^{2}-\frac{(k-2)(k-3)}{k-1}x-(k-2)}\Bigg]\\\\
&=& \Big(x+\frac{k-2}{k-1}\Big)^{m(k-3)}\Big\{x^{2}-\frac{(k-2)(k-3)}{k-1}x-(k-2)\Big\}^{m-n}\prod_{i=1}^{n}x^{-1}\Bigg[\Big\{x^2-\frac{k-2}{k-1}\lambda_{i}(G)x\\&&-r-\lambda_{i}(G)\Big\}
\Big\{x^{2}-\frac{(k-2)(k-3)}{k-1}x-(k-2)\Big\}-\Big(\frac{k-2}{k-1}x+1\Big)^{2} (k-2)(r+\lambda_{i}(G))\Bigg]\\\\
&=& \Big(x+\frac{k-2}{k-1}\Big)^{m(k-3)}\Big\{x^{2}-\frac{(k-2)(k-3)}{k-1}x-(k-2)\Big\}^{m-n}\prod_{i=1}^{n}\Big[x^{3}-\frac{k-2}{k-1}(\lambda_{i}(G)+k-3)x^{2}-\\&&\Big\{r+\lambda_{i}(G)+k-2+\frac{(k-2)^{3}}{(k-1)^2}(r+\lambda_{i}(G))-\frac{(k-2)^{2}(k-3)}{(k-1)^2}\lambda_{i}(G)\Big\}x\\&&+\frac{(k-2)(k-3)(r+\lambda_{i}(G))}{k-1}+\frac{(k-2)^{2}}{k-1}\lambda_{i}(G)-\frac{2(k-2)^{2}}{k-1}(r+\lambda_{i}(G))\Big]
\end{eqnarray*}
Thus the proof follows.
\end{proof}
\subsection{Adjacency spectra of subdivision of hyperflower}
First we recall the definition of hyperflower from \cite{And}.
\begin{definition}
A $(l, r)$-hyperflower with $t$ twins is a hypergraph $\Gamma=(V, E)$ whose vertex set can be written as $V=U \cup W$, where $U$ is a set of $l.t$ vertices $u_{1}^{1},\cdots, u_{t}^{1}, \cdots, u_{l}^{1}, \cdots , u_{l}^{t}$ which are called peripheral and there exist $r$ disjoint sets of vertices $h_{1},\cdots, h_{r}$ such that $W=\cup_{k=1}^{r}h_{k}$ and $E=\big\{e|e=h_{k}\cup \{u_{1}^{j},\cdots, u_{t}^{j}\}$ for $j=1,\cdots,l$ and $k=1,\cdots,r\big\}$.
\end{definition}
If $r=1$, we simply say that $\Gamma$ is a $l$-hyperflower with $t$ twins.
We denote a $l$-hyperflower with $t$ twins and $|h_{1}|=s$ by $F_{l}^{\{s,t\}}$. 
\begin{figure}
\centering
\includegraphics[width=2.0in]{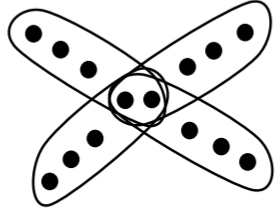}
\caption{4-hyperflower with 3 twins and 2 center}
\end{figure}
\begin{theorem}\label{t3}
The spectrum of $S(F_{l}^{\{s,t\}})$ consists of:
\begin{enumerate}[(i)]
\item the eigenvalue $-\frac{k-2}{k-1}$ with multiplicity $l(t-1)$,
\item the eigenvalue $-\frac{l(k-2)}{k-1}$ with multiplicity $(s-1)$,
\item the two eigenvalues $\frac{(t-1)(k-2)\pm\sqrt{(t-1)^{2}(k-2)^{2}+4t(k-1)^{2}}}{2(k-1)}$, each with multiplicity $(l-1)$,
\item the three roots of the equation \\
$(k-1)^{2}\lambda^{3}-(k-1)(k-2)(ls+t-l-1)\lambda^{2}-\{(ls+t)(k-1)^{2}+l(s+t-1)(k-2)^{2}\}\lambda-l(s+t)(k-1)(k-2)=0$.
\end{enumerate}
\end{theorem}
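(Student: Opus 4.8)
The plan is to exploit the rich symmetry of $F_{l}^{\{s,t\}}$ rather than to expand one large determinant. First I would fix notation: writing $k=s+t$ for the common hyperedge cardinality, the hypergraph has $l$ hyperedges $e_{j}=h_{1}\cup\{u_{1}^{j},\dots,u_{t}^{j}\}$, so $S(F_{l}^{\{s,t\}})$ has $s+lt+l$ vertices, namely the $s$ center vertices of $h_{1}$, the $lt$ peripheral vertices, and one subdivision vertex $w_{j}$ per original edge. The first real task is to read off all codegrees in $S(F_{l}^{\{s,t\}})$. Because each center vertex lies in all $l$ edges while each peripheral vertex lies in a single edge, one finds adjacency entries $\tfrac{l(k-2)}{k-1}$ between two centers, $\tfrac{k-2}{k-1}$ between two twins of one petal and between a center and a peripheral vertex, and $1$ between $w_{j}$ and each of the $k$ old vertices of $e_{j}$; all other pairs have codegree $0$. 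This yields a block form of $A_{S(F_{l}^{\{s,t\}})}$ in the three groups (centers, peripherals grouped by petal, subdivision vertices).

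Next I would write the eigenvalue equations $Ax=\lambda x$ in terms of the aggregate quantities $S_{c}=\sum_{a}c_{a}$, the petal sums $P_{j}=\sum_{i}p_{i}^{j}$, and the values $\omega_{j}$ on $w_{j}$, and then build eigenvectors sector by sector. For part (i) I would take vectors supported on the peripheral vertices with $c_{a}=\omega_{j}=0$ and $P_{j}=0$ for every petal; the peripheral equation collapses to $\lambda p_{i}^{j}=-\tfrac{k-2}{k-1}p_{i}^{j}$ while the center and $w$-equations hold automatically, giving the eigenvalue $-\tfrac{k-2}{k-1}$ on the $l(t-1)$-dimensional space of per-petal sum-zero twin patterns. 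For part (ii) I would take vectors supported on centers with $S_{c}=0$ and everything else zero; the center equation gives $\lambda c_{a}=-\tfrac{l(k-2)}{k-1}c_{a}$, producing $-\tfrac{l(k-2)}{k-1}$ with multiplicity $s-1$.

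Part (iii) is the one coupled sector. Here I would set $c_{a}=0$, let all twins in petal $j$ share a common value $\pi_{j}$ (so $P_{j}=t\pi_{j}$), keep $\omega_{j}$ free, and impose $\sum_{j}\pi_{j}=0$. The center and total-sum constraints are then automatic, and the peripheral and $w$-equations reduce to the $2\times2$ system $\lambda\pi_{j}=\tfrac{(t-1)(k-2)}{k-1}\pi_{j}+\omega_{j}$, $\lambda\omega_{j}=t\pi_{j}$, whose characteristic equation $\lambda^{2}-\tfrac{(t-1)(k-2)}{k-1}\lambda-t=0$ gives exactly the two eigenvalues in (iii); since the sum-zero space of $(\pi_{j})$ is $(l-1)$-dimensional and $\omega_{j}$ is proportional to $\pi_{j}$ on each eigenvector, each eigenvalue occurs with multiplicity $l-1$. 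Finally, for part (iv) I would invoke the equitable partition $\Pi=\{C,P,W\}$: the row sums give the $3\times3$ quotient matrix with rows $\big(\tfrac{(s-1)l(k-2)}{k-1},\tfrac{lt(k-2)}{k-1},l\big)$, $\big(\tfrac{s(k-2)}{k-1},\tfrac{(t-1)(k-2)}{k-1},1\big)$, $(s,t,0)$, and checking that its trace, sum of $2\times2$ principal minors, and determinant reproduce the three coefficients of the stated cubic after clearing $(k-1)^{2}$ finishes this part via the quotient-matrix proposition.

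The last step, which guarantees completeness, is the dimension count $l(t-1)+(s-1)+2(l-1)+3=s+lt+l$, equal to the number of vertices; since the four families of eigenvectors are mutually orthogonal they exhaust the spectrum. I expect the main obstacle to be partly bookkeeping—getting every codegree right, in particular the factor $l$ coming from the center vertices lying in all $l$ edges and the codegree $k-1$ (hence entry $1$) between $w_{j}$ and each old vertex of $e_{j}$—and partly the genuine subtlety of isolating the coupled petal sector in (iii), where one must argue that the constraint $\sum_{j}\pi_{j}=0$ alone (with $\omega_{j}\propto\pi_{j}$) produces the clean $2\times2$ reduction and the multiplicity $l-1$. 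The algebra verifying that the quotient matrix reproduces the cubic in (iv) is routine but sign-sensitive and is where I would double-check the computation.
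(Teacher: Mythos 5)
Your proposal is correct and follows essentially the same route as the paper: the same three families of local eigenvectors (per-petal sum-zero twin vectors for $-\tfrac{k-2}{k-1}$, sum-zero center vectors for $-\tfrac{l(k-2)}{k-1}$, and the coupled petal/subdivision-vertex sector reducing to $\lambda^{2}-\tfrac{(t-1)(k-2)}{k-1}\lambda-t=0$ with multiplicity $l-1$), followed by the same three-class equitable partition whose quotient matrix yields the cubic in (iv). The only differences are cosmetic — you describe sum-zero subspaces where the paper writes explicit difference vectors, and you make the completeness dimension count $l(t-1)+(s-1)+2(l-1)+3=s+lt+l$ explicit where the paper leaves it implicit.
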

\begin{proof}
Let $E(F_{l}^{\{s,t\}})=\{e_{1},\cdots,e_{l}\}$, let $U_{j}=\{u_{1}^{j},\cdots,u_{t}^{j}\}$ be the set of vertices of degree one in $e_{j}$, let $W=\{w_{1},\cdots,w_{s}\}$ be the vertices of degree $l$ in $F_{l}^{\{s,t\}}$ and let $P=\{p_{1},\cdots,p_{l}\}$, where $p_{j}$ be the new vertex introduced while subdividing the hyperedge $e_{j}\in E(F_{l}^{\{s,t\}})$.\\
Let $k=s+t$. If we partition the vertices of $S(F_{l}^{\{s,t\}})$ as $\{w_{1},\cdots,w_{s}\}\cup\{u_{1}^{1},\cdots,u_{t}^{1}\}\cup\cdots\cup\{u_{1}^{l},\cdots,u_{t}^{l}\}\cup\{p_{1},\cdots,p_{l}\},$
then the adjacency matrix of $(F_{l}^{\{s,t\}})$ can be written as:
\begin{equation*}
A_{S(F_{l}^{\{s,t\}})}=\frac{1}{(k-1)}\begin{pmatrix}
  l(k-2)(J_{s}-I_{s}) & (k-2)J_{s\times t}\otimes\mathbf{1}_{l}^{T} & (k-1)J_{s\times l} \\
  (k-2)J_{t \times s}\otimes\mathbf{1}_{l} & (k-2)(J_{t}-I_{t})\otimes I_{l} & (k-1)\mathbf{1}_{t}\otimes I_{l}\\
  (k-1)J_{l\times s} & (k-1)\mathbf{1}_{t}^{T}\otimes I_{l} & O_{l}
  \end{pmatrix}.
\end{equation*}
We can construct the following family of $l(t-1)$ linearly independent vectors.
\begin{equation}\label{e1}
 x_{i}^{\{e_{j}\}}(v)=\left\{ \begin{array}{rl}
& 1  \mbox{~~~for $v=u_{1}^{j}$}\\
& -1 \mbox{~~~ for $v=u_{i}^{j}$} \\
& 0 \mbox{~~~elsewhere},
       \end{array} \right.
\end{equation} for $2\leq i\leq t$.
Clearly $$A_{S(F_{l}^{\{s,t\}})}x_{i}^{\{e_{j}\}}=-\frac{k-2}{k-1}x_{i}^{\{e_{j}\}}.$$
Repeating this construction for the other hyperedges of $S(F_{l}^{\{s,t\}})$, we obtain $l(t-1)$ linearly independent eigenvectors, associated with the eigenvalue $\lambda=-\frac{k-2}{k-1}$.\\
Now we can construct the following family of $s-1$ linearly independent vectors.
\begin{equation}\label{e2}
 y_{i}(v)=\left\{ \begin{array}{rl}
& 1  \mbox{~~~for $v=w_{1}$}\\
& -1 \mbox{~~~for $v=w_{i}$} \\
& 0 \mbox{~~~elsewhere},
       \end{array} \right.
\end{equation} for $2\leq i\leq s$.
Clearly, any vector in (\ref{e2}) is orthogonal to any other vector in (\ref{e1}) and $$A_{S(F_{l}^{\{s,t\}})}y_{i}=-\frac{l(k-2)}{k-1}y_{i}.$$
Thus we obtain $(s-1)$ linearly independent eigenvectors, associated with the eigenvalue $\lambda=-\frac{l(k-2)}{k-1}$.\\
Let us construct $2(l-1)$ linearly independent vectors which are orthogonal to the previously obtained eigenvectors. The construction of these vectors are as follows. For $j=2,\cdots,l$ we have
 \begin{equation}\label{e3}
f_{j}(v)=\left\{ \begin{array}{rl}
& c_{1} \mbox{~~~ for $v=u_{i}^{1};i=1,\cdots,t$}\\
& -c_{1} \mbox{~~~ for $v=u_{i}^{j};i=1,\cdots,t$}\\
& c_{2} \mbox{~~~for $v=p_{1}$}\\
& -c_{2} \mbox{~~~for $v=p_{j}$}\\
& 0 \mbox{~~~elsewhere},
       \end{array} \right.
\end{equation}
where $c_{1}, c_{2}\in R$. If $f_{j}$ is an eigenvector of $A_{S(F_{l}^{\{s,t\}})}$ with an eigenvalue $\lambda$ then we have
 $$A_{S(F_{l}^{\{s,t\}})}f_{j}=\lambda f_{j},$$ which provides us the following equations,
$$
\left. \begin{array}{rl}
&\lambda c_{1}=\frac{(t-1)(k-2)}{k-1}c_{1}+c_{2} \\
&\lambda c_{2}=tc_{1}
       \end{array} \right\}
$$
for $j=2,\cdots,l$. Eliminating $c_{1}$ and $c_{2}$ we have the following equation for the eigenvalue $\lambda$,\\
$\lambda^{2}-\frac{(t-1)(k-2)}{k-1}\lambda-t=0$.\\
$\Rightarrow$ $\lambda=\frac{(t-1)(k-2)\pm\sqrt{(t-1)^{2}(k-2)^{2}+4t(k-1)^{2}}}{2(k-1)}$.\\ 
Thus we find $\lambda$ is an eigenvalue of $S(F_{l}^{\{s,t\}})$, with the multiplicity $(l-1)$.\\
Let $\{W,U,P\}$ be a partition of $V(S(F_{l}^{\{s,t\}}))$. Clearly this is an equitable partition of $V(S(F_{l}^{\{s,t\}}))$ where the quotient matrix $Q$ is defined as:
\begin{equation*}
Q=\frac{1}{(k-1)}\begin{pmatrix}
l(s-1)(k-2) & lt(k-2) & l(k-1)\\
s(k-2) & (t-1)(k-2) & (k-1) \\
s(k-1) & t(k-1) & O
  \end{pmatrix}.
\end{equation*}
Let $(\lambda_{k'},g'_{k'})$ be an eigenpair of $Q$ for $k'=1,2,3$. Using the matrix $Q$ we get $\lambda_{k'}$ $(k'=1,2,3)$ are the roots of the following equation.
\begin{eqnarray}\label{e4}
&&(k-1)^{2}\lambda^{3}-(k-1)(k-2)(ls+t-l-1)\lambda^{2}-\{(ls+t)(k-1)^{2}+l(s+t-1)(k-2)^{2}\}\lambda\notag\\&&-l(s+t)(k-1)(k-2)=0.
\end{eqnarray}
Let us define three vectors $g_{k'}$ for $k'=1,2,3$ as follows:
\begin{equation}\label{e5}
g_{k'}(v)=\left\{ \begin{array}{rl}
& g'_{k'}(1) \mbox{~~~for $v=w_{i};i=1,\cdots,s$}\\
& g'_{k'}(2) \mbox{~~~ for $v=u_{i}^{j};i=1,\cdots,t,j=1,\cdots,l$} \\
& g'_{k'}(3) \mbox{~~~ for $v=p_{j};j=1,\cdots,l$}.
       \end{array} \right.
\end{equation}
Clearly any vector in (\ref{e5}) is orthogonal to any other vector in (\ref{e1}), (\ref{e2}) and (\ref{e3}) and satisfy the equation $$A_{S(F_{l}^{\{s,t\}})}g_{k'}=\lambda_{k'} g_{k'}, ~\mbox{for}~ k'=1,2,3.$$
Thus $\lambda_{k'}$ $(k'=1,2,3)$ are the remaining three eigenvalues of $A_{S(F_{l}^{\{s,t\}})}$ which can be derived from the equation (\ref{e4}).
\end{proof}
\subsubsection{Adjacency spectra of subdivision of hyperstar}
Hyperstar is a particular type of hyperflower. If $s=1$, then $k$ uniform $l$-hyperflower with $t$ twins is a hyperstar where $t=k-1$.
\begin{theorem}\label{t4}
The spectrum of the above hyperstar consists of:
\begin{enumerate}[(i)]
\item $-\frac{k-2}{k-1}$ with the multiplicity $l(k-2)$,
\item $\frac{(k-2)^{2}\pm\sqrt{(k-2)^{4}+4(k-1)^{3}}}{2(k-1)}$, each with multiplicity $(l-1)$, and
\item the three roots of the equation \\
$(k-1)\lambda^{3}-(k-2)^{2}\lambda^{2}-\{l(k-1)+l(k-2)^{2}+(k-1)^{2}\}\lambda-lk(k-2)=0$.
\end{enumerate}
\end{theorem}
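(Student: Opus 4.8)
The plan is to derive this theorem directly from Theorem \ref{t3} by specialization, since a hyperstar is by definition nothing but the hyperflower $F_{l}^{\{s,t\}}$ with $s=1$ and $t=k-1$; note that the relation $k=s+t$ used throughout the proof of Theorem \ref{t3} continues to hold, as $1+(k-1)=k$. Hence the complete spectrum of $S(F_{l}^{\{1,k-1\}})$ is obtained by substituting $s=1$ and $t=k-1$ into each of the four items of Theorem \ref{t3}, and the only real task is to carry out the resulting algebraic simplifications and to confirm that no eigenvalue is gained or lost in the process.

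First I would dispose of the two eigenvector families. Item (i) of Theorem \ref{t3} gives $-\frac{k-2}{k-1}$ with multiplicity $l(t-1)$; setting $t=k-1$ this multiplicity becomes $l(k-2)$, which is item (i) here. Item (ii) of Theorem \ref{t3} contributes $-\frac{l(k-2)}{k-1}$ with multiplicity $s-1$; since $s=1$ this multiplicity is $0$, so that eigenvalue simply disappears and correctly does not occur in the hyperstar spectrum. For item (iii), substituting $t-1=k-2$ and $4t=4(k-1)$ into the discriminant $(t-1)^{2}(k-2)^{2}+4t(k-1)^{2}$ turns it into $(k-2)^{4}+4(k-1)^{3}$, so the pair of eigenvalues becomes $\frac{(k-2)^{2}\pm\sqrt{(k-2)^{4}+4(k-1)^{3}}}{2(k-1)}$, each still with multiplicity $l-1$; this is item (ii) here.

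The remaining step is the cubic. I would substitute the simplifications $ls+t-l-1=k-2$, $ls+t=l+k-1$, $s+t-1=k-1$, and $s+t=k$ into the equation of Theorem \ref{t3}(iv). After this, every coefficient of the cubic carries a common factor $(k-1)$, so dividing the whole equation by $(k-1)$ drops the leading term from $(k-1)^{2}\lambda^{3}$ to $(k-1)\lambda^{3}$ and, after regrouping, reproduces exactly $(k-1)\lambda^{3}-(k-2)^{2}\lambda^{2}-\{l(k-1)+l(k-2)^{2}+(k-1)^{2}\}\lambda-lk(k-2)=0$. As a consistency check I would verify the total multiplicity $l(k-2)+2(l-1)+3=lk+1$, which matches the number of vertices of $S(F_{l}^{\{1,k-1\}})$, namely the single center, the $l(k-1)$ peripheral vertices, and the $l$ subdivision vertices.

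The one point requiring genuine care—the main, if modest, obstacle—is extracting the common $(k-1)$ from the cubic of Theorem \ref{t3}(iv). The first, second, and fourth coefficients there visibly contain a factor $(k-1)$, but in the linear coefficient $-\{(ls+t)(k-1)^{2}+l(s+t-1)(k-2)^{2}\}$ the factor in the second summand appears only after the substitution $s+t-1=k-1$, which turns $l(s+t-1)(k-2)^{2}$ into $l(k-1)(k-2)^{2}$. I must expand and regroup this term carefully so that the $(k-1)^{2}$ leading coefficient genuinely collapses to $(k-1)$ rather than merely appearing to; everything else is a transcription of Theorem \ref{t3} with the two parameter values inserted.
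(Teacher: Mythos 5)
Your proposal is correct and is exactly the route the paper intends: Theorem \ref{t4} is stated without a separate proof precisely because it is the specialization $s=1$, $t=k-1$ of Theorem \ref{t3}, and your substitutions (multiplicity $l(t-1)\mapsto l(k-2)$, the vanishing of the $-\frac{l(k-2)}{k-1}$ family since $s-1=0$, the discriminant becoming $(k-2)^4+4(k-1)^3$, and the common factor $(k-1)$ cancelling from the quartic's cubic coefficients) all check out, as does your vertex count $lk+1$.
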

\subsubsection{Adjacency spectra of subdivision of petal overlapped hyperflower}
\begin{definition}
A $l$-hyperflower with $t$ twins is said petal overlapped if each petal hyperedge has a common vertex with the previous and following petal hyperedges respectively. Thus if $u_{i}^{j}$ and $w_{j}$ denote the vertex of degree one and $l$, respectively, and $v_{j}$ is the common vertex of any two petal hyperedge then each hyperedge $e_{j}$ consists of the vertices $\{w_{1},\cdots,w_{s},u_{1}^{j},\cdots,u_{t}^{j},v_{j},v_{j+1}\}$.
\end{definition}
\begin{figure}
\centering
\includegraphics[width=2.0in]{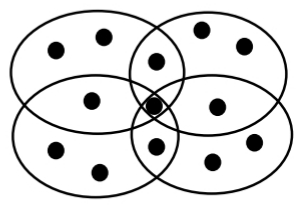}
\caption{petal overlapped 4-Hyperflower with 2 twins}
\end{figure}
\begin{theorem}\label{t5}
Let $F_{l}^{\{s,t\}}$ be a petal overlapped $l$-hyperflower with $t$ twins. Then the spectrum of $S(F_{l}^{\{s,t\}})$ consists of:
\begin{enumerate}[(i)]
\item the eigenvalue $-\frac{k-2}{k-1}$ with the multiplicity $l(t-1)$,
\item the eigenvalue $-\frac{l(k-2)}{k-1}$ with the multiplicity $(s-1)$,
\item the four roots of the equation \\
$(k-1)^{3}\lambda^{4}-(k-1)^{2}(k-2)(ls+t+1-l)\lambda^{3}-(k-1)\{(ls+t+4)(k-1)^{2}+(3ls+lt+l+2t+2)(k-2)^{2}\}\lambda^{2}-\{2l(s+t+1)(k-2)^{3}+(3ls+lt+4l+2t+4)(k-1)^{2}(k-2)\}\lambda-2l(s+t+2)(k-1)(k-2)^{2}=0$, and
\item the roots of the following equations \\
$(k-1)^{2}\lambda^{3}-(k-1)(k-2)\{(\alpha^{-1}+\alpha)+(t-1)\}\lambda^{2}-\{t(k-1)^{2}+(\alpha^{-1}+2+\alpha)(k-1)^{2}+(\alpha^{-1}+2+\alpha)t(k-2)^{2}-(\alpha^{-1}+\alpha)(t-1)(k-2)^{2}\}\lambda-(\alpha^{-1}+2+\alpha)(k-1)(k-2)-2t(k-1)(k-2)=0$\\
where $\alpha^{-1}+\alpha=2cos\frac{2\pi j'}{l}$, $j'=1,\cdots,l-1$.
\end{enumerate}
\end{theorem}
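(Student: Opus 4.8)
The plan is to follow the strategy of Theorem~\ref{t3}, but to exploit the extra cyclic symmetry created by the shared vertices $v_1,\dots,v_l$. Writing $k=s+t+2$ for the uniformity of each petal $e_j=\{w_1,\dots,w_s,u_1^j,\dots,u_t^j,v_j,v_{j+1}\}$ (the indices of $v$ read cyclically), I would first record the adjacency matrix of $S(F_l^{\{s,t\}})$ in the ordered blocks $\{w_i\},\{u_i^j\},\{v_j\},\{p_j\}$. The only genuinely new codegrees involve the overlap vertices: a center $w_a$ shares two petals with each $v_j$, giving entry $\frac{2(k-2)}{k-1}$; the $v_j$ form a cycle among themselves, so the $V$--$V$ block is $\frac{k-2}{k-1}A(C_l)$; and each $v_j$ meets the twins of petals $j-1$ and $j$ as well as the new vertices $p_{j-1},p_j$. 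Thus the $V$ and $P$ coordinates carry a genuine circulant structure in the petal index, while $W$ and $U$ couple to all petals uniformly. Parts (i) and (ii) then go through verbatim as in Theorem~\ref{t3}: the vectors supported on the twins of a single petal and summing to zero are eigenvectors for $-\frac{k-2}{k-1}$ with multiplicity $l(t-1)$, and the vectors supported on the centers and summing to zero are eigenvectors for $-\frac{l(k-2)}{k-1}$ with multiplicity $s-1$.

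For (iii) I would take the equitable partition $\{W,U,V,P\}$; computing the row sums produces a $4\times4$ quotient matrix $Q$, and by the proposition on quotient matrices its characteristic polynomial contributes four eigenvalues of $A_{S(F_l^{\{s,t\}})}$. A direct expansion of $\det(\lambda I-Q)$ yields the quartic in (iii); as a consistency check, $\mathrm{tr}(Q)=\frac{(k-2)(ls+t+1-l)}{k-1}$ reproduces the coefficient of $\lambda^{3}$.

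Finally, (iv) is where the cyclic symmetry does the real work. For a nontrivial circulant mode fix $\zeta=e^{2\pi i j'/l}$ with $j'=1,\dots,l-1$, and look for eigenvectors that vanish on $W$ and equal $a\zeta^{j}$, $b\zeta^{j}$, $c\zeta^{j}$ on $u_i^{j}$ (for every twin index $i$), on $v_j$, and on $p_j$ respectively. Because $\sum_j\zeta^j=0$ the center block decouples, and applying $A_{S(F_l^{\{s,t\}})}$ collapses each cycle operator to its circulant eigenvalue ($A(C_l)\mapsto\zeta+\zeta^{-1}$, and the $v$--$u$, $v$--$p$ incidences to $1+\zeta^{\pm1}$). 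This reduces the eigenvalue equation to a $3\times3$ matrix $M(\zeta)$ acting on $(a,b,c)$ whose entries depend on $\zeta$ only through $\zeta+\zeta^{-1}$ and $(1+\zeta)(1+\zeta^{-1})=2+\zeta+\zeta^{-1}$; writing $\alpha^{-1}+\alpha=\zeta+\zeta^{-1}=2\cos\frac{2\pi j'}{l}$, the characteristic polynomial of $M(\zeta)$ is exactly the cubic in (iv). Letting $j'$ run over $1,\dots,l-1$ supplies the remaining $3(l-1)$ eigenvalues, and the count $l(t-1)+(s-1)+4+3(l-1)=s+tl+2l$ confirms that the full spectrum has been recovered.

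The step I expect to be the main obstacle is (iv). One must argue cleanly that the complex circulant ansatz yields genuine real eigenvalues of the real symmetric matrix (by pairing $\zeta$ with $\bar\zeta$, so that eigenvalues depend only on $\alpha^{-1}+\alpha$), and one must verify the $3\times3$ reduction is consistent, i.e.\ that the twin-symmetric amplitude on $U$ interacts correctly with both $v_j$ and $v_{j+1}$ and with both $p_{j-1}$ and $p_j$. The bookkeeping of which petal each $v_j$ or $p_j$ belongs to, and the resulting factors $1+\zeta$ versus $1+\zeta^{-1}$, is the error-prone part; getting these right is precisely what makes the coefficients of the cubic collapse into symmetric functions of $\alpha$.
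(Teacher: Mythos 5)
Your proposal is correct and follows essentially the same route as the paper: the same twin-difference and center-difference eigenvectors for (i)--(ii), the same four-class equitable partition $\{W,U,V,P\}$ and quotient matrix for (iii), and the same circulant ansatz on the $V$, $U$, $P$ coordinates (vanishing on $W$, indexed by nontrivial $l$-th roots of unity) reducing to a $3\times 3$ system for (iv).
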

\begin{proof}
Let $E(F_{l}^{\{s,t\}})=\{e_{1},\cdots,e_{l}\}$, let $U_{j}=\{u_{1}^{j},\cdots,u_{t}^{j}\}$ be the vertices of degree one in $e_{j}$, $V=\{v_{1},\cdots,v_{l}\}$ be the vertices of degree two, $W=\{w_{1},\cdots,w_{s}\}$ be the vertices of degree $l$ in $F_{n}$. Let $P=\{p_{1},\cdots,p_{l}\}$, where $p_{j}$ be the new vertex introduced while subdividing the hyperedge $e_{j}\in E(F_{l}^{\{s,t\}})$ .\\
Take $k=s+t+2$.\\
If we partition the vertices of $S(F_{l}^{\{s,t\}})$ as $\{w_{1},\cdots,w_{s}\}\cup\{v_{1},\cdots,v_{l}\}\cup\{u_{1}^{1},\cdots,u_{t}^{1}\}\cup\cdots\cup\{u_{1}^{l},\cdots,u_{t}^{l}\}\cup\{p_{1},\cdots,p_{l}\},$
then the adjacency matrix of $S(F_{l}^{\{s,t\}})$ can be written as:
\begin{equation*}
A_{S(F_{l}^{\{s,t\}})}=\frac{1}{(k-1)}\begin{pmatrix}
  l(k-2)(J_{s}-I_{s}) & 2(k-2)J_{s\times l} & (k-2)J_{s\times lt} & (k-1)J_{s\times l} \\
  2(k-2)J_{l\times s} & (k-2)A(C_{l}) & (k-2)\mathbf{1}_{t}^{T}\otimes B(C_{l}) & (k-1)B(C_{l}) \\
  (k-2)J_{lt\times s} & (k-2)\mathbf{1}_{t}\otimes B^{T}(C_{l}) & (J_{t}-I_{t})\otimes I_{l} & (k-1)\mathbf{1}_{t}\otimes I_{l}\\
  (k-1)J_{l\times s} & (k-1)B^{T}(C_{l}) & (k-1)\mathbf{1}_{t}^{T}\otimes I_{l} & O_{l}
  \end{pmatrix}.
\end{equation*}
We can construct the following family of $l(t-1)$ linearly independent vectors.
\begin{equation}\label{e6}
 x_{i}^{\{e_{j}\}}(v)=\left\{ \begin{array}{rl}
& 1  \mbox{~~~for $v=u_{1}^{j}$}\\
& -1 \mbox{~~~ for $v=u_{i}^{j}$} \\
& 0 \mbox{~~~elsewhere},
       \end{array} \right.
\end{equation} for $2\leq i\leq t$ and $j=1,\cdots,l$.
Using the matrix $A_{S(F_{l}^{\{s,t\}})}$ and equation (\ref{e6}) we have $$A_{S(F_{l}^{\{s,t\}})}x_{i}^{\{e_{j}\}}=-\frac{k-2}{k-1}x_{i}^{\{e_{j}\}}, ~\mbox{for}~ i=2,\cdots,t ~\mbox{and}~ j=1,\cdots,l.$$
Repeating this construction for the other hyperedges of $S(F_{l}^{\{s,t\}})$, we obtain $l(t-1)$ linearly independent eigenvectors, associated with the eigenvalue $\lambda=-\frac{k-2}{k-1}$.\\
We construct the following family of $s-1$ linearly independent vectors.
\begin{equation}\label{e15}
 y_{i}(v)=\left\{ \begin{array}{rl}
& 1  \mbox{~~~for $v=w_{1}$}\\
& -1 \mbox{~~~for $v=w_{i}$} \\
& 0 \mbox{~~~elsewhere},
       \end{array} \right.
\end{equation} for $2\leq i\leq s$.
Clearly, any vector in (\ref{e15}) is orthogonal to any other vector in (\ref{e6}) and $$A_{S(F_{l}^{\{s,t\}})}y_{i}=-\frac{l(k-2)}{k-1}y_{i}.$$
Thus we obtain $(s-1)$ linearly independent eigenvectors, associated with the eigenvalue $\lambda=-\frac{l(k-2)}{k-1}$.\\
Let $\{W,V,U,P\}$ be a partition of $V(S(F_{l}^{\{s,t\}}))$. Clearly this is an equitable partition of $V(S(F_{l}^{\{s,t\}}))$ where the quotient matrix $Q$ is defined as:
\begin{eqnarray*}
Q&=&\frac{1}{(k-1)}\begin{pmatrix}
l(s-1)(k-2) & 2l(k-2) & lt(k-2) & l(k-1)\\
2s(k-2) & 2(k-2) & 2t(k-2) & 2(k-1) \\
s(k-2) & 2(k-2) & (t-1)(k-2) & (k-1)\\
s(k-1) & 2(k-1) & t(k-1) & O
  \end{pmatrix}.
\end{eqnarray*}
Let $(\lambda_{k'},g'_{k'})$ be an eigenpair of $Q$ for $k'=1,2,3,4$. Using the matrix $Q$ we get $\lambda_{k'}$ $(k'=1,2,3)$ are the roots of the following equation.
\begin{eqnarray}\label{e7}
&&(k-1)^{3}\lambda^{4}-(k-1)^{2}(k-2)(ls+t+1-l)\lambda^{3}-(k-1)\{(ls+t+4)(k-1)^{2}\notag\\&&+(3ls+lt+l+2t+2)(k-2)^{2}\}\lambda^{2}-\{2l(s+t+1)(k-2)^{3}\notag\\&&+(3ls+lt+4l+2t+4)(k-1)^{2}(k-2)\}\lambda-2l(s+t+2)(k-1)(k-2)^{2}=0.
\end{eqnarray}
Let us define four vectors $g_{k'}$ as
\begin{equation}\label{e8}
g_{k'}(v)=\left\{ \begin{array}{rl}
& g'_{k'}(1) \mbox{~~~for $v=w_{i};i=1,\cdots,s$}\\
& g'_{k'}(2) \mbox{~~~ for $v=v_{j};j=1,\cdots,l$} \\
& g'_{k'}(3) \mbox{~~~ for $v=u_{i}^{j};i=1,\cdots,t,j=1,\cdots,l$} \\
& g'_{k'}(4) \mbox{~~~ for $v=p_{j};j=1,\cdots,l$},
       \end{array} \right.
\end{equation}
for $k'=1,2,3,4$.\\
Clearly any vector in (\ref{e8}) is perpendicular to any other vector in (\ref{e6}) and (\ref{e15}) and satisfies the equation $$A_{S(F_{l}^{\{s,t\}})}g_{k'}=\lambda_{k'} g_{k'}, ~\mbox{for}~k'=1,2,3,4.$$
Thus the another set of eigenvalues $\lambda_{k'}$ $(k'=1,2,3,4)$ of $A_{S(F_{l}^{\{s,t\}})}$ can be obtained from the equation (\ref{e7}).\\
Let us construct $3(l-1)$ linearly independent vectors which are orthogonal to the previously obtained eigenvectors. The construction of these vectors are as follows. For $j'=1,\cdots,l-1$ we have
 \begin{equation}\label{e9}
f_{j'}(v)=\left\{ \begin{array}{rl}
& c_{1}\alpha^{j+j'-1} \mbox{~~~ for $v=v_{j};j=1,\cdots,l$} \\
& c_{2}\alpha^{j+j'-1} \mbox{~~~ for $v=u_{i}^{j};i=1,\cdots,t,j=1,\cdots,l$}\\
& c_{3}\alpha^{j+j'-1} \mbox{~~~ for $v=p_{j};j=1,\cdots,l$},
       \end{array} \right.
\end{equation}
where $\alpha$ is the $l^{th}$ roots of unity and $c_{1}, c_{2}, c_{3}\in R$.
If $f_{j'}$ is an eigenvector of $A_{S(F_{n})}$ with an eigenvalue $\lambda$ then we have
 $$A_{S(F_{l}^{\{s,t\}})}f_{j'}=\lambda f_{j'},$$ which gives us the following set of equations.\\
$$
\left. \begin{array}{rl}
&\lambda \alpha^{j}c_{1}=(\alpha^{j-1}+\alpha^{j})\frac{t(k-2)}{k-1}c_{2}+(\alpha^{j-1}+\alpha^{j+1})\frac{k-2}{k-1}c_{1}+(\alpha^{j-1}+\alpha^{j})c_{3}. \\
&\lambda \alpha^{j}c_{2}=\frac{(t-1)(k-2)}{k-1}\alpha^{j}c_{2}+(\alpha^{j}+\alpha^{j+1})\frac{k-2}{k-1}c_{1}+\alpha^{j}c_{3}. \\
&\lambda \alpha^{j}c_{3}=t\alpha^{j}c_{2}+(\alpha^{j}+\alpha^{j+1})c_{1}.
 \end{array} \right\}
$$
Solving the above equations we have the following equations for $\lambda$.\\
$(k-1)^{2}\lambda^{3}-(k-1)(k-2)\{(\alpha^{-1}+\alpha)+(t-1)\}\lambda^{2}-\{t(k-1)^{2}+(\alpha^{-1}+2+\alpha)(k-1)^{2}+(\alpha^{-1}+2+\alpha)t(k-2)^{2}-(\alpha^{-1}+\alpha)(t-1)(k-2)^{2}(k-4)\}\lambda-(\alpha^{-1}+2+\alpha)(k-1)(k-2)-2t(k-1)(k-2)=0$,\\
where $\alpha^{-1}+\alpha=2cos\frac{2\pi j'}{l}$, $j'=1,\cdots,l-1$.\\
Clearly any vector in (\ref{e9}) is orthogonal to any vector in (\ref{e6}) and (\ref{e15}) and any vector in (\ref{e8}) because
\begin{eqnarray*}
f_{j'}^{T}(v)g_{k'}(v)&=&0\times g'_{k'}(1)+c_{1}g'_{k'}(2)(1+\alpha+\cdots+\alpha^{l})+tc_{2}g'_{k'}(3)(1+\alpha+\cdots+\alpha^{l})\\&&+c_{3}g'_{k'}(4)(1+\alpha+\cdots+\alpha^{l})\\
&=&0.
\end{eqnarray*}
Thus we have our desired result.
\end{proof}
\subsection{Adjacency spectra of subdivision of a squid like hypergraph}
First we recall the definition of a squid from \cite{Hu}.
\begin{definition}
Let $H=(V,E)$ be a $k$-uniform hypergraph. If we can number the vertex set $V(H)$ as $V(H)=\{i_{1,1},\cdots,i_{1,k},\cdots,i_{k-1,1},\cdots,i_{k-1,k},i_{k}\}$ such that the set of hyperedges $E(H)=\{\{i_{1,1},\cdots,i_{1,k}\},\cdots,\{i_{k-1,1},\cdots,
i_{k-1,k}\},\{i_{1,1},\cdots,i_{k-1,1},i_{k}\}\}$, then $H$ is a squid.
\end{definition}
Now we define a squid like hypergraph.
\noindent
\textbf{Squid like hypergraph}: Let $H=(V,E)$ be a $k$-uniform hypergraph. Here we take the vertex set $V(H)$ as $V(H)=\{i_{1,1},\cdots,i_{1,k},\cdots,i_{k,1},\cdots,i_{k,k}\}$ such that the set of hyperedges $E(H)=\{\{i_{1,1},\cdots,i_{1,k}\},\cdots,\{i_{k,1},\cdots,
i_{k,k}\},\{i_{1,1},\cdots,i_{k,1}\}\}$.
\begin{figure}
\centering
\includegraphics[width=2.0in]{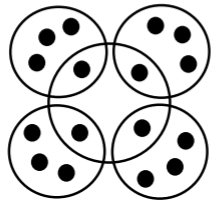}
\caption{Squid like hypergraph}
\end{figure}
\begin{theorem}\label{t6}
Let $S_{k}$ be the $k$-uniform squid like hypergraph. If $k\geq2$ is an integer, then the spectrum of $S(S_{k})$ consists of:
\begin{enumerate}[(i)]
\item the eigenvalue $-\frac{k-2}{k-1}$ with multiplicity $k(k-2)$,
\item the three roots of the equation \\ $(k-1)^{2}\lambda^{3}-(k-1)^{3}\lambda^{2}-\{(k-2)^{2}+(k-1)^{2}+2(k-2)^{3}+(k-1)^{3}\}\lambda+(k-1)(k-2)-2(k-1)^{2}(k-2)=0$, each with multiplicity $(k-1)$, and
\item the four roots of the equation \\
$(k-1)\lambda^{4}-\{(k-2)^{2}+(k-2)(k-1)\}\lambda^{3}-\{(k-1)^{2}+(k-1)(k+1)+(k-2)^{2}-(k-2)^3\}\lambda^{2}+\{(k-1)^{2}(k-2)+(k-2)^{2}(k+1)-2(k-1)(k-2)\}\lambda+k(k-1)^{2}=0$.
\end{enumerate}
\end{theorem}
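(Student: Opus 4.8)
The plan is to follow the eigenvector-plus-equitable-partition method used for the hyperflowers (Theorems \ref{t3} and \ref{t5}), organizing the spectrum according to the symmetry that permutes the $k$ arms. I first fix notation: write $i_{j,1}$ for the root of arm $j$ (the degree-two vertex lying in both the arm $\{i_{j,1},\dots,i_{j,k}\}$ and the head $\{i_{1,1},\dots,i_{k,1}\}$), $i_{j,2},\dots,i_{j,k}$ for the $k-1$ genuine degree-one vertices of arm $j$, $p_j$ for the new vertex created when arm $j$ is subdivided, and $q$ for the new vertex created when the head is subdivided. This gives $k+k(k-1)+k+1=k^{2}+k+1$ vertices, which is the number of eigenvalues I must produce. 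The one computation that must be done carefully is the codegree table: every subdivided hyperedge has cardinality $k$, so a pair lying in $c$ of them contributes $c/(k-1)$ to $A_{S(S_k)}$. I expect entry $\tfrac{k-2}{k-1}$ for root--root, root--(same-arm degree-one) and (degree-one)--(degree-one within one arm) pairs; entry $1$ for $p_j$ paired with any original vertex of arm $j$ and for $q$ paired with any root; and $0$ otherwise. Ordering the vertices as $W_1=\{i_{j,1}\}$, $W_2=\{i_{j,a}:a\ge 2\}$, $W_3=\{p_j\}$, $W_4=\{q\}$ then displays $A_{S(S_k)}$ in a clean block form.

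I would then extract the three families one symmetry-type at a time. For (i), the $k-1$ degree-one vertices of a fixed arm carry the submatrix $\tfrac{k-2}{k-1}(J_{k-1}-I_{k-1})$, so any vector supported there and orthogonal to $\mathbf 1_{k-1}$ is an eigenvector with eigenvalue $-\tfrac{k-2}{k-1}$; its arm-sum is zero, so the root $i_{j,1}$ and the vertex $p_j$ (which both see only that sum) receive $0$ and it remains an eigenvector of the whole matrix. Taking $k-2$ independent differences in each of the $k$ arms gives the $k(k-2)$ copies of $-\tfrac{k-2}{k-1}$. For (ii) I use the arm-difference vectors as in equation (\ref{e3}): assign a common profile $(\alpha,\beta,\gamma)$ to the root, to every degree-one vertex, and to $p$ of arm $1$, the negative profile to arm $j$, and $0$ elsewhere (in particular $0$ at $q$). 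Because the root-values then sum to zero, the equation at $q$ and at every off-support root is automatically satisfied, and the eigenvalue equation collapses to the $3\times 3$ system
\begin{eqnarray*}
\Big(\lambda+\tfrac{k-2}{k-1}\Big)\alpha-(k-2)\beta-\gamma &=& 0,\\
-\tfrac{k-2}{k-1}\alpha+\Big(\lambda-\tfrac{(k-2)^{2}}{k-1}\Big)\beta-\gamma &=& 0,\\
-\alpha-(k-1)\beta+\lambda\gamma &=& 0.
\end{eqnarray*}
Its vanishing determinant is the cubic equation of part (ii), and the $k-1$ choices $j=2,\dots,k$ make each cubic root an eigenvalue of multiplicity $k-1$.

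For (iii) I invoke the quotient-matrix proposition: $\{W_1,W_2,W_3,W_4\}$ is an equitable partition whose quotient matrix
\[
Q=\frac{1}{k-1}\begin{pmatrix}
(k-1)(k-2) & (k-1)(k-2) & k-1 & k-1\\
k-2 & (k-2)^{2} & k-1 & 0\\
k-1 & (k-1)^{2} & 0 & 0\\
k(k-1) & 0 & 0 & 0
\end{pmatrix}
\]
has characteristic polynomial equal to the quartic in (iii); lifting its four eigenvectors via the partition produces the fully symmetric eigenvectors. The three families are mutually orthogonal --- (i) is arm-antisymmetric while (ii) and (iii) are arm-symmetric within each arm, and (ii) has vanishing arm-sum while (iii) is arm-constant --- so the count $k(k-2)+3(k-1)+4=k^{2}+k+1$ shows they exhaust the spectrum. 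I expect the main obstacle to be purely computational, namely expanding the $3\times 3$ and $4\times 4$ determinants while correctly tracking the many powers of $(k-2)$ and $(k-1)$; the only conceptual care needed lies in the codegree table, since each root $i_{j,1}$ belongs to two hyperedges and therefore couples to the other roots through the subdivided head, unlike the genuine degree-one vertices.
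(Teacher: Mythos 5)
Your proposal is correct and follows essentially the same route as the paper's own proof: the same $k(k-2)$ difference vectors on the degree-one vertices of each arm, the same three-parameter arm-difference vectors yielding the cubic (your $3\times3$ system is exactly the paper's system in $c_1,c_2,c_3$), and the same four-class equitable partition whose quotient matrix (yours is the paper's with the last two classes transposed) gives the quartic. The only differences are notational, namely the swap of the labels $p$ and $q$ for the head and arm subdivision vertices.
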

\begin{proof}
Let $E(S_{k})=\{e_{1},\cdots,e_{k}\}$, let $U_{j}=\{u_{1}^{j},\cdots,u_{k-1}^{j}\}$ be the vertices of degree one in $e_{j}$, $W=\{w_{1},\cdots,w_{k}\}$ be the vertices of degree two in center hyperedge of $S_{k}$, let $p$ be the new vertex and $Q=\{q_{1},\cdots,q_{k}\}$ be the new vertices introduced while subdividing the central hyperedge and petal hyperedges of $S(S_{k})$ respectively.\\
If we partition the vertices as $\{w_{1},\cdots,w_{k}\}\cup\{u_{1}^{1},\cdots,u_{k-1}^{1}\}\cup\cdots\cup\{u_{1}^{k},\cdots,u_{k-1}^{k}\}\cup p \cup\{q_{1},\cdots,q_{k}\},$
then the adjacency matrix of $S(S_{k})$ can be written as:\\
\begin{eqnarray*}
A_{S(S_{k})}&=&\frac{1}{k-1}\begin{pmatrix}
  (k-2)(J_{k}-I_{k}) & (k-2)\mathbf{1}_{k-1}^{T}\otimes I_{k} & (k-1)\mathbf{1}_{k} & (k-1)I_{k} \\
  (k-2)\mathbf{1}_{k-1}\otimes I_{k} & (k-2)(J_{k-1}-I_{k-1})\otimes I_{k} & O_{k(k-1)\times 1} & (k-1)\mathbf{1}_{k-1}\otimes I_{k}\\
  (k-1)\mathbf{1}_{k}^{T} & O_{1\times k(k-1)} & O_{1} & O_{1\times k}\\
  (k-1)I_{k} & (k-1)\mathbf{1}_{k-1}^{T}\otimes I_{k} & O_{k\times 1} & O_{k}
  \end{pmatrix}.
\end{eqnarray*}
We can construct the following family of $k(k-2)$ linearly independent vectors.
\begin{equation}\label{e10}
 x_{i}^{\{e_{j}\}}(v)=\left\{ \begin{array}{rl}
& 1  \mbox{~~~for $v=u_{1}^{j}$}\\
& -1 \mbox{~~~ for $v=u_{i}^{j}$} \\
& 0 \mbox{~~~elsewhere},
       \end{array} \right.
\end{equation} for $2\leq i\leq t$.
Clearly $$A_{S(S_{k})}x_{i}^{\{e_{j}\}}=-\frac{k-2}{k-1}x_{i}^{\{e_{j}\}}.$$
Repeating this construction for the other hyperedges of $S(S_{k})$, we obtain $k(k-2)$ linearly independent eigenvectors, associated with the eigenvalue $\lambda=-\frac{k-2}{k-1}$.\\
Let us construct $3(k-1)$ linearly independent vectors which are orthogonal to the previously obtained eigenvectors. The construction of these vectors are as follows. For $j=2,\cdots,k$ we have
 \begin{equation}\label{e11}
f_{j}(v)=\left\{ \begin{array}{rl}
& c_{1} \mbox{~~~for $v=w_{1}$}\\
& -c_{1} \mbox{~~~for $v=w_{j}$}\\
& c_{2} \mbox{~~~ for $v=u_{i}^{1};i=1,\cdots,t$}\\
& -c_{2} \mbox{~~~ for $v=u_{i}^{j};i=1,\cdots,t$}\\
& c_{3} \mbox{~~~for $v=q_{1}$}\\
& -c_{3} \mbox{~~~for $v=q_{j}$}\\
& 0 \mbox{~~~elsewhere},
       \end{array} \right.
\end{equation}
where $c_{1}, c_{2}, c_{3}\in R$. If $f_{j}$ is an eigenvector of $A_{S(S_{k})}$ with an eigenvalue $\lambda$ then we have
 $$A_{S(S_{k})}f_{j}=\lambda f_{j},$$ which provides us the following set of equations.
$$
\left. \begin{array}{rl}
& \lambda c_{1}=(k-2)c_{2}-\frac{k-2}{k-1}c_{1}+c_{3}, \\
& \lambda c_{2}=\frac{(k-2)^{2}}{k-1}c_{2}+\frac{k-2}{k-1}c_{1}+c_{3}, \\
& \lambda c_{3}=(k-1)c_{2}+c_{1},
       \end{array} \right\}
$$\\
for $j=2,\cdots,k$. Eliminating $c_{1}$, $c_{2}$ and $c_{3}$ we have the following equation for the eigenvalue $\lambda$,\\
$(k-1)^{2}\lambda^{3}-(k-1)^{3}\lambda^{2}-\{(k-2)^{2}+(k-1)^{2}+2(k-2)^{3}+(k-1)^{3}\}\lambda+(k-1)(k-2)-2(k-1)^{2}(k-2)=0$, for $j=2,\cdots,k$.\\
Let $\{W,U,p,Q\}$ be a partition of $V(S(S_{k}))$. Clearly this is an equitable partition of $V(S(S_{k}))$ where the quotient matrix $Q$ is defined as:
\begin{eqnarray*}
Q&=&\frac{1}{(k-1)}\begin{pmatrix}
(k-1)(k-2) & (k-1)(k-2) & (k-1) & (k-1)\\
  (k-2) & (k-2)^2 & O & (k-1) \\
  k(k-1) & O & O & O\\
  (k-1) & (k-1)^2 & O & O
  \end{pmatrix}
\end{eqnarray*}
Let $(\lambda_{k'},g'_{k'})$ be an eigenpair of $Q$ for $k'=1,2,3,4$. Using the matrix $Q$ we get $\lambda_{k'}$ $(k'=1,2,3,4)$ are the roots of the following equations.
\begin{eqnarray}\label{e12}
(k-1)\lambda^{4}-\{(k-2)^{2}+(k-2)(k-1)\}\lambda^{3}-\{(k-1)^{2}+(k-1)(k+1)+(k-2)^{2}-(k-2)^3\}\lambda^{2}\notag\\+\{(k-1)^{2}(k-2)+(k-2)^{2}(k+1)-2(k-1)(k-2)\}\lambda+k(k-1)^{2}=0.
\end{eqnarray}
Let us define four vectors $g_{k'}$ for $k'=1,2,3,4$ as follows
\begin{equation}\label{e13}
g_{k'}(v)=\left\{ \begin{array}{rl}
& g'_{k'}(1) \mbox{~~~for $v=w_{j};j=1,\cdots,k$}\\
& g'_{k'}(2) \mbox{~~~ for $v=u_{i}^{j};i=1,\cdots,k-1,j=1,\cdots,k$} \\
& g'_{k'}(3) \mbox{~~~ for $v=p$} \\
& g'_{k'}(4) \mbox{~~~for $v=q_{j};j=1,\cdots,k$}.
       \end{array} \right.
\end{equation}
Clearly any vector in (\ref{e13}) is orthogonal to any other vector in (\ref{e10}) and (\ref{e11}) and satisfies the equation $$A_{S(S_{k})}g_{k'}=\lambda_{k'}g_{k'}, ~\mbox{for}~ k'=1,2,3,4.$$
Thus $\lambda_{k'}$ $(k'=1,2,3,4)$ are the remaining four eigenvalues of $A_{S(S_{k})}$ which can be derived from the equation (\ref{e12}). Thus we get our desired results.
\end{proof}

\section{Construction of cospectral hypergraphs}
The study of non-isomorphic cospectral hypergraphs is important since it reveals which hypergraph properties cannot be deduced from their spectra. Two hypergraphs are said to be $A$-cospectral if their adjacency matrices have the same spectrum. In \cite{Sar}, Sarkar and Banerjee gave an example of non-isomorphic $A$-cospectral hypergraph and constructed infinitely many pairs of non-isomorphic $A$-cospectral hypergraphs using vertex corona. Abiad and Khramova \cite{Abi} proposed a new method for constructing non-isomorphic uniform hypergraphs which are cospectral with respect to their adjacency matrices.
In this section we construct several classes of non-regular non-isomorphic $A$-cospectral hypergraphs using subdivision. 
\begin{theorem}\label{t8}
Let $H$, $H^{'}$ be two regular non-isomorphic hypergraphs. If $H$ and $H^{'}$ are $A$-cospectral then $S(H)$ and $S(H^{'})$ are non-isomorphic $A$-cospectral.
\end{theorem}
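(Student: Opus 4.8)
The plan is to split the statement into its two halves: the cospectrality is essentially a corollary of Theorem~\ref{t1}, while the non-isomorphism comes from the fact that $H$ can be recovered from $S(H)$ up to isomorphism.

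First, cospectrality. Theorem~\ref{t1} exhibits the entire spectrum of $S(H)$ as a function of only the data $k$ (uniformity), $r$ (regularity), $n$ (vertices), $m$ (hyperedges), and the eigenvalue multiset $\{\lambda_i(H)\}$: namely the eigenvalue $0$ with multiplicity $m-n$, together with the roots of $(k-1)x^{2}-(k-2)\lambda_i(H)x-r(k-1)-(k-1)\lambda_i(H)=0$. Hence it suffices to check that these data agree for $H$ and $H'$. Being $A$-cospectral forces $A_H$ and $A_{H'}$ to have the same order, so $n=n'$, and the same eigenvalue multiset, so $\{\lambda_i(H)\}=\{\lambda_i(H')\}$. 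Since each row of $A_H$ sums to $r$, the vector $\mathbf{1}_{n}$ is an eigenvector with eigenvalue $r$, and $r$ is the largest eigenvalue; thus $r=\lambda_1(H)=\lambda_1(H')=r'$. Both hypergraphs being $k$-uniform (which, in the linear case relevant here, is already pinned down by the spectrum via $\mathrm{tr}(A_H^{2})=nr/(k-1)=\sum_i\lambda_i(H)^2$), the handshake identity $mk=nr$ gives $m=m'$. Feeding this common data into Theorem~\ref{t1} shows that $S(H)$ and $S(H')$ have identical spectra, so they are $A$-cospectral.

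Second, non-isomorphism. Since $H\not\cong H'$, it is enough to prove that subdivision is injective on isomorphism classes, i.e. that $S(H)\cong S(H')$ implies $H\cong H'$; equivalently, that $H$ is reconstructible from $S(H)$. In $S(H)$ the $m$ inserted vertices and the $n$ original vertices are separated by degree: an inserted vertex $p_e$ belongs to all $k$ hyperedges coming from $e$ and so has degree $k$, whereas an original vertex belongs to $r(k-1)$ of the new hyperedges and so has degree $r(k-1)$. When $r(k-1)\neq k$, any isomorphism $S(H)\to S(H')$ preserves these two degree-classes; the inserted vertices then biject with the hyperedges of $H$, and the original neighbours of each inserted vertex recover the corresponding hyperedge. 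Thus the isomorphism descends to an isomorphism $H\to H'$, contradicting $H\not\cong H'$.

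The only genuine obstacles are the parameter-matching above (in particular guaranteeing the same $k$, handled for linear hypergraphs by $\mathrm{tr}(A_H^2)$) and the degenerate degree case $r(k-1)=k$. For integers $k\geq 2$ and $r\geq 1$ the latter occurs only when $r=k=2$, where $H$ is a disjoint union of cycles and $S(H)$ is the ordinary subdivision graph replacing each cycle $C_\ell$ by $C_{2\ell}$; since length-doubling is injective on multisets of cycle lengths, $S(H)\cong S(H')$ still forces $H\cong H'$, so the construction goes through in this case as well. I expect the degree-based partition of $V(S(H))$ into original and inserted vertices—canonical because isomorphisms preserve degree—to be the crux, with the remaining steps being routine bookkeeping.
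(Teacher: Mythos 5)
Your proposal is correct, and for the cospectrality half it follows the same route as the paper: feed the data $(n,m,k,r,\{\lambda_i(H)\})$ into Theorem~\ref{t1} and observe that the output depends on nothing else. Where you genuinely go beyond the paper is in making explicit two points that its one-line proof leaves unaddressed. First, you verify that cospectrality of $H$ and $H'$ actually forces the auxiliary parameters to agree ($n=n'$ from the matrix order, $r=\lambda_1$ from the constant row sums, $m=nr/k$ from the handshake identity, and $k$ via $\operatorname{tr}(A_H^2)=nr/(k-1)$ in the linear setting that Theorem~\ref{t1} implicitly assumes); the paper simply takes this for granted. Second, and more substantially, you prove that $S(H)\not\cong S(H')$, which the paper asserts without argument: your observation that inserted vertices have degree $k$ while original vertices have degree $r(k-1)$, so that any isomorphism of the subdivisions respects this partition and hence descends to an isomorphism of the originals (with the single degenerate case $r=k=2$ handled separately via doubling of cycle lengths), is exactly the missing reconstruction step. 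The computations check out: $p_e$ lies in all $k$ new hyperedges arising from $e$ and its original neighbours are precisely the vertices of $e$, while an original vertex of degree $r$ lands in $k-1$ new hyperedges per old one; and $r(k-1)=k$ for integers $k\ge 2$, $r\ge 1$ only when $k=r=2$. So your write-up is not just correct but closes real gaps in the published argument; the only caveat worth flagging is that the determination of $k$ from the spectrum relies on linearity, so strictly speaking one should either assume $H$ and $H'$ share the same uniformity as part of the hypothesis (as the paper tacitly does) or restrict to linear hypergraphs.
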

\begin{proof}
The Theorem \ref{t1} shows that the adjacency spectrum of subdivision hypergraph only depends on regularity, uniformity and the adjacency spectrum of the original hypergraph. So if we take regular non-isomorphic $A$-cospectral hypergraphs then we can construct non-regular non-isomorphic $A$-cospectral hypergraphs. Thus the results follows.
\end{proof}
Similarly, using Theorem \ref{t2} we have the following theorem.
\begin{theorem}\label{t7}
Let $G$, $G^{'}$ be regular non-isomorphic graphs. If $G$ and $G^{'}$ are $A$-cospectral then $S(G^{k})$ and $S(G^{'k})$ are non-isomorphic $A$-cospectral.
\end{theorem}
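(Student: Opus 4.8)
The plan is to mirror the proof of Theorem~\ref{t8}, replacing the appeal to Theorem~\ref{t1} by an appeal to Theorem~\ref{t2}. There are two things to check: that $S(G^{k})$ and $S(G^{'k})$ are $A$-cospectral, and that they are non-isomorphic. For cospectrality I would note that each of the three parts of the spectrum described in Theorem~\ref{t2} depends only on the integer $k$, the vertex count $n$, the edge count $m$, the regularity $r$, and the eigenvalues $\lambda_{i}(G)$ of $A_{G}$. Since $G$ and $G'$ are cospectral they have the same number of eigenvalues, hence the same $n$, and the same eigenvalue list; because a regular graph has largest eigenvalue equal to its degree, $r=\lambda_{1}(G)=\lambda_{1}(G')$ is common, and then $m=nr/2$ agrees as well. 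Substituting these shared data into the three factors of Theorem~\ref{t2} gives $\Phi_{A_{S(G^{k})}}(x)=\Phi_{A_{S(G^{'k})}}(x)$, so the two hypergraphs are $A$-cospectral.

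For the non-isomorphism I would show that $G\mapsto S(G^{k})$ is injective up to isomorphism, so that $G\not\cong G'$ forces $S(G^{k})\not\cong S(G^{'k})$. The idea is to reconstruct $G$ from $S(G^{k})$ by undoing the two operations in turn. In $S(G^{k})$ the vertices fall into three degree classes: each subdivision vertex $p_{f}$ lies in exactly $k$ hyperedges, each auxiliary power-vertex $i_{e,j}$ in $k-1$, and each original vertex in $r(k-1)$. Having separated these classes, one merges the $k$ hyperedges through each $p_{f}$ back into a single hyperedge to recover $G^{k}$, and then deletes the degree-one auxiliary vertices, collapsing every $k$-hyperedge to its underlying edge and recovering $G$. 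Because any isomorphism $S(G^{k})\to S(G^{'k})$ preserves degrees and incidences, it respects this canonical reconstruction and therefore descends to an isomorphism $G\to G'$.

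The main obstacle is making the non-isomorphism step rigorous rather than merely intuitive. One must confirm that the three degree values $k$, $k-1$, and $r(k-1)$ are genuinely distinct, which holds for all $k\geq3$ and $r\geq2$ (indeed $r(k-1)\geq 2(k-1)>k$ in that range), and must verify that the reconstruction is canonical so that it commutes with isomorphisms. The boundary case $r=1$, where the original and auxiliary vertices share the degree $k-1$, would have to be handled by an extra local argument distinguishing the two types, or excluded. By comparison the cospectrality half is routine, being a direct substitution of the common parameters into Theorem~\ref{t2}.
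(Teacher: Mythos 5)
Your cospectrality argument is exactly the route the paper takes: its proof of Theorem~\ref{t7} is a one-line remark that the argument of Theorem~\ref{t8} goes through with Theorem~\ref{t2} in place of Theorem~\ref{t1}, i.e.\ that the spectrum of $S(G^{k})$ depends only on $k$, $n$, $m$, $r$ and the eigenvalues of $A_{G}$, all of which are shared by cospectral regular graphs. Your observation that $n$, $r=\lambda_{1}$ and $m=nr/2$ are themselves determined by the spectrum of a regular graph is a small but genuine tightening of what the paper leaves implicit, and it is correct.

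Where you go beyond the paper is the non-isomorphism half. The paper simply asserts that non-isomorphic inputs yield non-isomorphic subdivisions, with no argument. Your reconstruction of $G$ from $S(G^{k})$ via the three degree classes $r(k-1)$, $k-1$ and $k$ is sound: the degree-$k$ vertices are precisely the subdivision vertices, merging the $k$ hyperedges through each of them recovers $G^{k}$ canonically, and stripping the degree-one auxiliary vertices recovers $G$; since any isomorphism preserves degrees it commutes with this reconstruction. Your caveat about $r=1$ is the right thing to flag, though it is in fact vacuous here: two cospectral $1$-regular graphs are both perfect matchings on the same number of vertices and hence isomorphic, so the hypotheses of the theorem cannot be met with $r=1$. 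In short, your proposal matches the paper on the part the paper actually proves and supplies a rigorous argument for the part the paper only claims.
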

non-isomorphic $A$-cospectral regular graphs are readily available in the literature, for example, see \cite{Van}.

\section*{Acknowledgement}
The author is thankful to Samiron Parui for fruitful discussions.

\end{document}